\documentclass[10pt]{article}
\usepackage{amsfonts}
\usepackage{amsmath,amsthm,mathtools,amscd,amssymb,mathrsfs,setspace}
\usepackage[makeroom]{cancel}

\usepackage{relsize}

\usepackage{latexsym, epsf, epsfig}
\usepackage{bm}
\usepackage{color}
\usepackage[hmargin=1in,vmargin=1in]{geometry}
\usepackage{graphicx}
\usepackage{hyperref}
\usepackage{cite}
\usepackage{multirow}
\usepackage{float} 
\newcommand{\cA}{{\mathcal{A}}}

\newcommand\weakto\rightharpoonup

\theoremstyle{plain}
\newtheorem{theorem}{Theorem}[section]
\newtheorem{lemma}[theorem]{Lemma}

\newtheorem{definition}{Definition}
\theoremstyle{remark}
\newtheorem{remark}{Remark}[section]
\numberwithin{equation}{section} \numberwithin{theorem}{section}
\numberwithin{remark}{section} 

\begin{document}

\title{Semigroup Solutions for A Multilayered Filtration System}
 \author{{\small \begin{tabular}[t]{c@{\extracolsep{.8em}}c@{\extracolsep{.8em}}c}
     George Avalos &       {  Galen Richard} &{  Justin T. Webster} \\
\it Univ. Nebraska-Lincoln  \hskip.2cm  & \hskip.4cm \it Univ. of Maryland, Baltimore County   \hskip.2cm  & \hskip.4cm \it Univ. of Maryland, Baltimore County   \\
\it Lincoln, NE &  \it Baltimore, MD &\it Baltimore, MD\\
gavalos@math.unl.edu & grichar1@umbc.edu  & websterj@umbc.edu \\
\end{tabular}}}
\maketitle

\begin{abstract}
\noindent We investigate solutions to a coupled system of partial differential equations that describe a multilayered filtration system. Namely, we study the interaction of a viscous incompressible flow with bulk poroelasticity, via a poroelastic interface. The configuration consists of two 3D toroidal subdomains connected via a plate interface, which permits elastic deformation and perfusive fluid dynamics. The governing dynamics comprise Stokes equations in the bulk fluid region, Biot's equations in the bulk poroelastic region, and the recent poroplate of Mikeli\'c at the interface. Coupling occurs on the top and lower surfaces of the plate, and involves conservation of mass, stress balance, and a certain slip condition for the fluid free-flow. 

We seek strong (and mild) solutions in the Hilbert space framework via the Lumer-Phillips theorem. The resolvent analysis employs a nonstandard mixed variational formulation which captures the complex, multi-physics coupling at the interface. We explicitly characterize the infinitesimal generator associated to the linear Cauchy problem and establish the generation of a $C_0$-semigroup on a suitably chosen finite-energy  space.  With the semigroup in hand, we may treat elastic nonlinearities for plate displacements through perturbation theory. These  result parallel those for Biot-Stokes filtration systems, and complement the recently established weak solution theory for multilayer filtrations. The agency of the semigroup  straightforwardly admits structural (plate) nonlinearity into the dynamics. Future stability and regularity analyses for multilayer filtrations are also made possible by these results, as well as a comparison of spectral and regularity properties between filtration configurations, and the elucidation of the mitigating poroplate dynamics as possibly regularizing and stabilizing. 
\noindent  
\vskip.25cm
\noindent {\em Keywords}: {fluid-poroelastic-structure interaction, filtration, Beavers-Joseph-Saffman, semigroup methods, multi-layer systems}
\vskip.25cm
\noindent
{\em 2020 AMS MSC}: 74F10, 76S05, 35M13, 76M30, 35D30 
\vskip.25cm
\noindent Acknowledgments: The first author was partially supported by NSF-DMS 1907823 and Simons Grant MP-TSM-00002793; the third author was partially supported by NSF-DMS 2307538. 
\end{abstract}

\section{Introduction} 
Poroelasticity refers to fluid flow within an elastically deformable porous media \cite{coussy,show,bgsw}. When a poroelastic model is coupled to a  fluid free flow, the resulting coupled system of partial differential equations (PDEs) is referred to as a fluid-poroelastic-structure interaction (FPSI) \cite{bcmw,yotov2,yotov1} or filtration system \cite{filt1}. These models historically arose in the context of geological motivations such as soil consolidation and resource extraction, in particular for flows adjacent to soilbeds or seabeds \cite{filt1} (and references therein). Since many biological systems can be modeled as FPSI problems, there has been a recent focus toward those applications \cite{bcmw,bw,BMW}, for instance, for blood flow through arteries and/or organ and tissue systems---see \cite{bgsw,AGW} and references therein for more discussion. The modeling of FPSI systems can, in certain circumstanced, enhanced by describing them as multilayered \cite{georgemulti} poroelastic systems. These multilayered systems are composed of a 3D bulk poroelastic layer coupled to a 3D fluid free flow, with a 2D mitigating dynamics, typically representing a reticular elastic plate \cite{jeff1,jeff2,jeff3} or poroelastic plate \cite{bcmw}. The reticular plate provides an elastic dynamics, but does not ``see" fluid dynamics and perfectly admits fluid coupling between Biot and Stokes pressures; the poroelastic plate of Mikeli\'c \cite{mikelic,ellie} is a lower-dimensional poroleastic model with elastic and fluid dynamics. The introduction of such a plate model adds additional challenges to the modeling and analysis, in particular in weak formulations and coupled elliptic systems associated to the dynamics generator. The resulting class of filtration models are multiphysics and multiscale, and careful consideration must be paid to the coupling of dynamics across the lower dimensional interface.

The work herein  studies the coupled Biot-poroplate-Stokes system. In this case the plate is the aforementioned ``2.5D" poroelastic system of recent interest \cite{ellie,mikelic}. This unique model is a standard 2D elastic plate (perhaps elastically nonlinear), which admits a 3D perfusion via an inflation variable acting in the normal  (coupling) direction and ``connecting" the top and bottom surfaces of the plate interface. We will consider the physical coupling conditions at the interface as proposed in \cite{bcmw}, where weak solutions were constructed across several physical regimes. On the lateral boundaries, we will follow suit for the recent filtration analyses \cite{AGW,AW,bcmw,jeff1,jeff2,jeff3} by considering spatially-periodic boundary conditions in the lateral directions. The goal of our work is to frame the linear filtration system as a Cauchy problem in an appropriate state space that respects the natural energy identity associated with the dynamics, as well as captures salient boundary and coupling conditions. We will postulate a dynamics generator, $\mathbf A$, and work to characterize its domain, $\mathcal D(\mathbf A)$, in such a way that the Lumer-Phillips theorem \cite{pazy} can be applied.

The chosen state space formulation will necessitate the introduction of a fluid-pressure sub-problem, permitting the elimination of the Stokes fluid pressure from the state space, as recently done in \cite{AGW}, and adapted from by now classical work such as \cite{AT}. With $\mathcal D(\mathbf A)$ defined, dissipativity will follow directly from careful integration by parts justified thereon. To prove maximality, the resolvent system will be treated through elliptic methods; in particular, a mixed variational system is formulated on a carefully chosen  auxiliary space to make use of a  form of the Babu\v{s}ka-Brezzi Lemma. As such the Lumer-Phillips yields a strongly continuous ($C_0$) semigroup of contractions corresponding to the action of the Biot-poroplate-Stokes dynamics. In the standard way \cite{pazy}---see more details in \cite{AGW}---the semigroup gives rise to strong solutions (for data in $\mathcal D(\mathbf A)$), as well as finite energy solutions as mild and weak solutions (which will coincide as in \cite{AW}). 

While this multilayered system was investigated in \cite{bcmw}, only the point of view of existence of weak solutions was taken. The central contributions also included the weak formulation of the coupled dynamics itself,  as well as a weak-strong uniqueness criterion. Certain nonlinear and degenerate regimes were accommodated. On the other hand, here, we have chosen to utilize semigroup theory yielding full well-posedness of strong and mild solutions. Regularity of strong solutions is encapsulated in the characterization of the generator's domain. Structural nonlinearities---here the von K\'arm\'an plate nonlinearity---can be easily folded into the analysis through standard semigroup perturbation results \cite{pazy}. Moreover, strong solutions can be used as approximants to construct weak solutions (even in degenerate regimes), which will be unique using the recent semigroup methodology of \cite{AW}. Lastly, the resolvent analysis performed in the maximality step yields the starting point for studying the long-time stability properties of the multilayer dynamics (such as in \cite{georgemulti}) via spectral analysis. With this multilayer filtration semigroup in hand, a forthcoming work can compare these qualitative properties (e.g., regularity and stability) with the Biot-Stokes filtration of \cite{AGW}. 

We briefly indicate how the linear semigroup framework accommodates physically relevant nonlinearities in the mitigating plate dynamics. 
Large-deflection effects are possible and relevant for FPSI modeling. In particular, for \emph{moving-domain} formulations for the surrounding bulk media, i.e., $\Omega_b=\Omega_b(t)$ and $\Omega_f=\Omega_f(t)$ as in \cite{jeff1,jeff2,jeff3}, the evolution of the domains can give rise to an interface geometry which is non-constrant, and, in particular, not a perturbation of a flat, interface domain. To address such scenarios, the mathematical approach must treat geometric aspects of the interface. In particular, a plate (reticular or poroelastic) model with intrinsic large-deflection structure (for instance in the sense of von K\'arm\'an) is desirable to extend modeling efficacy. 
Here,  we restrict our attention to fixed bulk geometries, but we utilize the semigroup to accommodate a cubic-type nonlinear elastic effect \emph{within the plate dynamics}.
 The argument we provide extends to a broad class of $H^2$ locally-Lipschitz perturbations which are standard for large deflection plate dynamics (e.g., Berger and Kirchhoff plates).  
At the same time, this should be viewed as a first step toward a fully nonlinear poro-plate theory with perfusion; in forthcoming work,  \cite{nonporoplate}, a nonlinear analogue of the Mikeli\'c poro-plate model will be developed and analyzed.

\section{Informal Statement of Main Result and Discussion}
\label{sec:main-result-informal}

The central work of this paper is to properly formulate the coupled Biot-poroplate-Stokes dynamics as an abstract Cauchy problem on a natural finite-energy  state space, $\mathbf X$, and to prove that the associated evolution operator, $\mathbf A$ is the infinitesimal generator of a  $C_0$-contraction semigroup on $\mathbf X$. Concretely, we define the action of $\mathbf A$ on its domain $\mathcal D(\mathbf A)$ so that solutions of $\dot{\mathbf y}(t)=\mathbf A\mathbf y(t)$ correspond to the linear multilayer filtration (in the absence of forcing), with the prescribed interface and boundary conditions. Generation of a $C_0$-semigroup then yields: for each $\mathbf y_0\in\mathbf X$ a unique mild (finite-energy) solution, and for each $\mathbf y_0\in\mathcal D(\mathbf A)$ a unique strong solution. Subsequently, we use perturbation theory to accommodate the von K\'arm\'an large deflection plate dynamics, yielding strong and mild solutions in this nonlinear case. 

\subsection{Novelty of the Work}
While the underlying filtration configuration and weak formulation originate in prior work, the present paper contributes a \emph{semigroup-level} well-posedness theory for the  multilayer system. This yields new analytical tools that are not visible at the purely weak-solution level. The main points are as follows:
\begin{itemize}
  \item \emph{Full well-posedness for the Biot-poroplate-Stokes system.}
  We provide an explicit generator $\mathbf A$ and a rigorous characterization of $\mathcal D(\mathbf A)$ that encodes the multi-physics interface conditions (mass conservation, stress transmission, and slip) at the top and bottom surfaces of the poro-plate. We have unique weak and strong solutions in the inertial and non-degenerate regime, and strong solutions may subsequently be used as approximants to obtain weak solutions. 

  \item \emph{Nonstandard resolvent analysis, tailored to the poroplate coupling.}
  The mediating poroelastic plate introduces additional (and coupled) surface unknowns and traces, producing an elliptic resolvent system that does not reduce to the standard Stokes-Biot transmission problem. To overcome this, we construct a mixed variational system on an auxiliary space and invoke Babu\v{s}ka-Brezzi to obtain a unique weak resolvent solution, which is then shown \emph{a posteriori} to lie in $\mathcal D(\mathbf A)$.

  \item \emph{Fluid-pressure elimination in the multilayer setting.}
  As in coupled fluid-structure semigroup formulations, we avoid carrying the Stokes pressure as a state variable by introducing a pressure subproblem that depends on the remaining unknowns (cf.\ \cite{AT,AGW}). This substantially streamlines the state space and clarifies the energy identity, and was not used (or needed) in previous weak solution analysis.

  \item \emph{A platform for perturbations and for spectral/stability and regularity analyses.}
  Once the linear semigroup is established, structural nonlinearities in the plate (e.g.\ von K\'arm\'an type) can be incorporated \cite{pazy}. Moreover, the resolvent structure developed here is the natural starting point for spectral and asymptotic stability investigations in the spirit of \cite{georgemulti}.
\end{itemize}

\subsection{Comparison with Established Literature} 
The present analysis is most naturally positioned between three quite recent strands of the literature:

\smallskip
\noindent\emph{(i) Weak solution theory for multilayer filtration:} 
In \cite{bcmw} the \emph{same coupled multilayer model} (Biot in the bulk poroelastic region, Stokes in the bulk fluid region, and a poroelastic plate interface) is treated from the standpoint of weak solutions, including regimes with quasi-static components and certain nonlinear/degenerate effects. The approach there is variational and time-discretization based, yielding weak existence (and a uniqueness criterion, though not proper uniqueness). No regularity theory is pursued. By contrast, the present work targets inertial and non-degenerate regime and proves \emph{full linear semigroup well-posedness} (mild and strong solutions) by identifying the generator and its domain. This necessarily requires a sharper encoding of interface and boundary conditions (in particular, regularity consistent with $\mathcal D(\mathbf A)$) and leads to a different set of elliptic tools in the maximality step as well as strong solutions and uniqueness of all classes of solutions. 

\smallskip
\noindent\emph{(ii) The decoupled poroelastic plate:}
The paper \cite{ellie} focuses on the evolution of the poroelastic plate from \cite{mikelic} itself, emphasizing existence/uniqueness for the plate dynamics (including quasi-static formulations) and, in an inertial setting, a semigroup-based treatment of the plate generator. Our work may be viewed as extending this perspective to the \emph{fully coupled} filtration configuration, where the plate interacts simultaneously with a Stokes flow above and a Biot medium below. This coupling creates a transmission problems with coupled traces on \emph{both} sides of the plate and necessitates a mixed formulation at the resolvent level that has no analogue in the decoupled plate analysis. 

\smallskip
\noindent\emph{(iii) Semigroup theory for Biot-Stokes without a mediating plate:}
The work \cite{AGW} establishes a semigroup framework for the coupled 3D Biot/Stokes system \emph{without} a lower-dimensional interface layer. This yields strong and mild well-posedness of that filtration system, and weak solutions are constructed in the limit. In \cite{AW}, provides full well-posedness of weak solutions for Biot-Stokes dynamics. In the case of this filtration, interface conditions differ substantially and the analysis reduces to a two-domain transmission, similar to previous work on Stokes-Lam\'e coupling. Our model here introduces an additional ``2.5D" poroelastic dynamics at the interface, producing a genuinely three-component coupling (fluid/bulk-poro/plate) and a correspondingly richer resolvent system, as motivated by \cite{georgemulti}. Nevertheless, we adapt two key structural ideas from \cite{AGW}: (a) eliminating the Stokes pressure from the state space via an auxiliary pressure problem (cf.\ also the classical semigroup FSI formulations in \cite{AT}); and (b) establishing maximality through a mixed variational formulation and a Babu\v{s}ka--Brezzi argument. The novelty here is that these mechanisms must be wholly redesigned to accommodate the poro-plate variables and their two-sided coupling.


\section{PDE Model and Modeling Background}
\subsection{Spatial Domain and Physical Quantities}
The spatial configuration of interest is pictured below. It consists of a 3D toroidal domain, with a 2D interface separating the bulk regions.

\begin{figure}[H]
\includegraphics[width=8cm]{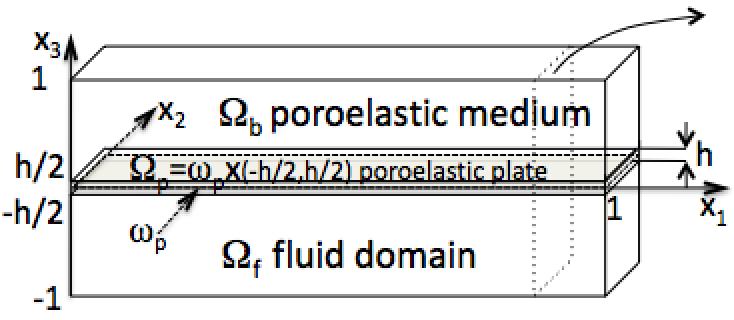} \quad
\includegraphics[width=6cm]{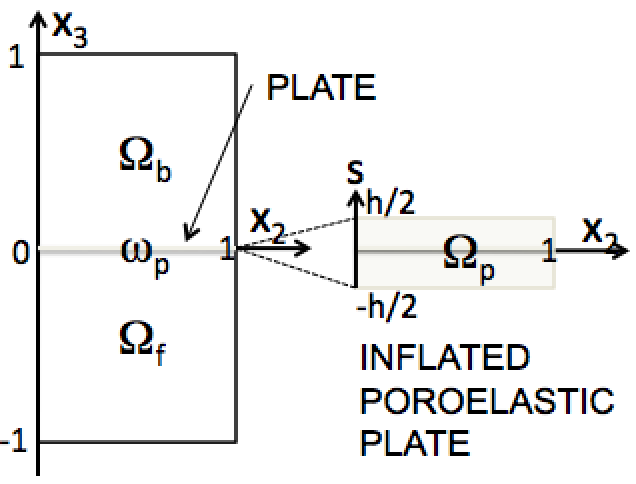}
\caption{Left: 3D domain $\Omega$, including plate pressure $\Omega_p$. Right: 2D vertical cross-section through domain $\Omega$. (Original image from \cite{bcmw}.)}\label{fig:domains}
\end{figure}

\noindent The set $\Omega_b := (0,1)^2 \times (0,1)$ is the Biot domain, $\Omega_f := (0,1)^2 \times (0,-1)$ is the free fluid domain. There are two domains used for the poroelastic plate; $\omega_p := (0,1)^2 \times \{0\}$ is the displacement domain, and $\Omega_p := (0,1)^2 \times (-h/2, h/2)$ is the inflation domain used for the plate pressure. Note that for the pressure domain, the inflation variable $s$ is used, which, although parallel with, is distinct from $\mathbf{x}_3$. This is a specific convention which we follow, due to Mikeli\'c \cite{bcmw}. The interfaces of interest are the elastic plate interface $\Gamma_I := \omega_p$, and the top and bottom plate pressure interfaces $\omega_p^+ = \partial \Omega_p \vert_{s =  h/2}$ and $\omega_p^- = \partial \Omega_p \vert_{s = -h/2}$. The top and bottom boundaries are $\Gamma_b := \partial \Omega_b \vert_{\mathbf{x}_3 = 1}$ and $\Gamma_f := \partial \Omega_f \vert_{\mathbf{x}_3 = -1}$. Letting $\Omega := \Omega_b \cup \omega_p \cup \Omega_f$, its lateral faces are then $\Gamma_{lat} := \partial \Omega \setminus (\Gamma_b \cup \Gamma_f)$. Finally the plate boundary is $\Gamma_c := \partial \omega_p$.\\ \\
The physical constants of interest here are---see \cite{coussy,show,filt1,mikelic} for more information:
\begin{itemize}
    \item $(\lambda_b, \mu_b, \rho_b, \alpha_b, c_b, k_b)$, where $\lambda_b, \mu_b$ are the Lam\'e constants of the biot bulk Biot region, $\rho_b$ is the density of the poroelastic matrix, $\alpha_b$ is the Biot-Willis coefficient, $c_b$ is the constrained storage coefficient, and $k_p$ is the constant permeability of the poroelastic matrix.
    \item $(D, \gamma, \rho_p, \alpha_p, c_p, k_p)$, where $D > 0$ is the elastic stiffness of the plate, and $\gamma > 0$ is an coefficient which is added for coercivity. The other constants act the same as in the 3D Biot region, but for the poroelastic plate.
    \item $(\rho_f, \mu_f)$, are the fluid density and  dynamic viscosity.
\end{itemize}
The variables utilized in the system are
\begin{itemize}
    \item $(\boldsymbol{\eta}, p_b)$, poroelastic displacement and pressure defined on $\Omega_b$
    \item $(w, p_p)$, transverse displacement of the plate, and the poroelastic plate pressure, defined on $\omega_p$ and $\Omega_p$ respectively;
    \item $(\mathbf{u}, p_f)$, Stokes fluid velocity and pressure defined on $\Omega_f$.
\end{itemize}

We utilize an adjoint pair $\mathcal{K}, \tilde{\mathcal{K}}$, for the poroelastic plate, which are defined as follows (see \cite{ellie}):
\begin{definition}
The moment operator $\mathcal{K} : L^2(\Omega_p) \rightarrow L^2(\omega_p)$, which has continuous action
$$\mathcal{K}(p) = \int_{-\frac{h}{2}}^{\frac{h}{2}} x_3p \ dx_3.$$
Moreover, we define $\tilde{\mathcal{K}} : L^2(\omega_p) \rightarrow L^2(\Omega_p)$ with action
$$\tilde{\mathcal{K}}(q) = x_3 q.$$
These operators are adjoint, in that:
$$(\mathcal{K}(p), q)_{L^2(\omega_p)} = (p, \mathcal{\tilde{K}}(q))_{L^2(\Omega_p)}; \ \forall p \in L^2(\Omega_p), ~q \in L^2(\omega_p).$$
\end{definition}
To define the PDEs of interest, we will utilize the following stress tensors. First we define the symmetrized gradient $\mathbf{D}$ by its action~~
$\mathbf{D}(\mathbf{v}) = \frac{1}{2}(\nabla \mathbf{v} + (\nabla \mathbf{v})^T).$ This allows us to introduce the elastic stress. 
\begin{definition}
    We invoke the Saint Venant-Kirchhoff elastic stress tensor \cite{kesavan},
    $$\sigma^E(\boldsymbol{\eta}) = 2 \mu_b \mathbf{D}(\boldsymbol{\eta}) + \lambda_b (\nabla \cdot \boldsymbol{\eta}) \mathbf{I}.$$
    This gives rise to the full poroelastic stress tensor \cite{show,coussy}
    $$\sigma_b(\boldsymbol{\eta}, p_b) = \sigma^E(\boldsymbol{\eta}) - \alpha_b p_b \mathbf{I}.$$
    Finally we utilize the standard fluid stress tensor
    $$\sigma_f(\mathbf{u}, p_f) = 2 \mu_f \mathbf{D}(\mathbf{u}) - p_f \mathbf{I}.$$
\end{definition}
\begin{remark}\label{sym}
Note that when $\nabla \cdot \mathbf{v} = 0$,
$$2 \, \text{div}\{\mathbf{D}(\mathbf{v})\} = \Delta \mathbf{v} + \nabla (\nabla \cdot \mathbf{v}) = \Delta \mathbf{v}.$$
Also from the symmetries of $\mathbf{I}$ and $\mathbf{D}(\cdot)$ we know that
\begin{align*}
    (\sigma^E(\boldsymbol{\eta}), \mathbf{D}(\mathbf{w}))_{L^2} &= 2 \mu_b (\mathbf{D}(\boldsymbol{\eta}), \mathbf{D}(\mathbf{w}))_{L^2} + \lambda_b ((\nabla \cdot \boldsymbol{\eta}) \mathbf{I}, \mathbf{D}(\mathbf{w}))_{L^2} \\
        &= 2 \mu_b (\mathbf{D}(\boldsymbol{\eta}), \nabla \mathbf{w})_{L^2} + \lambda_b ((\nabla \cdot \boldsymbol{\eta})\mathbf{I}, \nabla \mathbf{w})_{L^2} \\
        &= (\sigma^E(\boldsymbol{\eta}), \nabla \mathbf{w})_{L^2}.
\end{align*}\end{remark}
Using the  symmetry in Remark \ref{sym}, and the definition of the Frobenius tensor product, we have 
\begin{equation} (\sigma^E(\boldsymbol{\eta}), \mathbf{D}(\mathbf{w}))_{L^2} = (\sigma^E(\mathbf{w}), \mathbf{D}(\boldsymbol{\eta}))_{L^2}.\end{equation}
We also use the convention that the elementary unit vectors $\mathbf{e}_i$ run in the same direction as $\mathbf{x}_i$ (see Figure \ref{fig:domains}). The symbol $\boldsymbol{\tau}=\mathbf e_i$ will denote (tangential) cases where $i = 1,2$.

\subsection{Dynamics Model}
We restate the model from \cite{bcmw}, adapted here. Namely, the multi-layered system under consideration is:
\begin{align}
\rho_b \boldsymbol{\eta}_{tt} - \nabla \cdot \sigma^{E}(\boldsymbol{\eta}) + \alpha_b \nabla p_b &= \mathbf{F}_b &\ \text{on} \ (0,T) \times \Omega_b \label{*.a} \\
[c_b p_b + \alpha_b \nabla \cdot \boldsymbol{\eta}]_t - \nabla \cdot (k_b \nabla p_b) &= S &\ \text{on} \ (0,T) \times \Omega_b \label{*.b} \\
\rho_p w_{tt} + D \Delta_{\omega_p}^2 w + \gamma w + \alpha_p \Delta_{\omega_p} \mathcal{K}(p_p) &= F_p &\ \text{on} \ \omega_p \label{*.c} \\ 
[c_p p_p - \alpha_p \tilde{\mathcal{K}}(\Delta_{\omega_p} w)]_t - \partial_s (k_p \partial_s p_p) &= 0 &\ \text{on} \ \Omega_p \label{*.d} \\
\rho_f \mathbf{u}_t - \mu_f \Delta \mathbf{u} + \nabla p_f &= \mathbf{F}_f &\ \text{on} \ \Omega_f \label{*.e} \\
\nabla \cdot \mathbf{u} &= 0 &\ \text{on} \label{*.f} \ \Omega_f.
\end{align}
The plate forcing term above will be denoted by ${F}_p=\sigma_b \mathbf{e}_3 \cdot \mathbf{e}_3 \vert_{\omega_p} - \sigma_f \mathbf{e}_3 \cdot \mathbf{e}_3 \vert_{\omega_p}$ at times below. Above, we write $k_b$ and $k_p$ within the operators, indicating that future work may may take them to be variable or nonlinear coefficients \cite{ellie,bcmw,bgsw}. Although we do not specify it here, there will be Cauchy data associated with this system as well. The coupling conditions for the multilayered Biot-Stokes solution variables on $(0,T) \times \Gamma_I$ are:
\begin{align}
\langle 0,0,w \rangle &= \boldsymbol{\eta} \vert_{\omega_p} & \text{(continuity of vertical displacements)} \label{*.g} \\
\beta \mathbf{u} \vert_{\omega_p} \cdot \boldsymbol{\tau} &= - [\sigma_f \vert_{\omega_p} \mathbf{e}_3] \cdot \boldsymbol{\tau} & \text{(Beavers-Joseph-Saffman; tangential velocity slip])} \label{*.h} \\
k_p \partial_s p_p \vert_{\omega_p^+} &= k_b \partial_{\mathbf{x_3}} p_b \vert_{\omega_p} & \text{(continuity of vertical filtration velocities)} \label{*.i} \\
p_p \vert_{\omega_p^+} &= p_b \vert_{\omega_p} & \text{(continuity of pore pressures)} \label{*.j} \\
w_{t} - (\mathbf{u} \cdot \mathbf{e}_3)\vert_{\omega_p} &= k_p \partial_s p_p \vert_{\omega_p^-} & \text{(continuity of filtration velocities)} \label{*.k} \\
-\sigma_f \mathbf{e}_3 \cdot \mathbf{e}_3 \vert_{\omega_p} &= p_p \vert_{\omega_p^-} & \text{(continuity of vertical fluid stress)}. \label{*.l}
\end{align}
 For the non-interactive boundary conditions, we begin by asserting that the plate will be taken to be clamped at its boundary, i.e.,
$$w=\partial_{\nu}w = 0, ~\text{ on }~\partial \omega_p.$$
\begin{remark} While the fluid and bulk poroelastic domains are toroidal (i.e., taken with periodic boundary conditions), clamping the plate eliminates certain non-critical technical issues for strong solutions. This type of assumption has been made in other multilayer analyses \cite{bcmw,jeff1}.\end{remark}

For the lateral sides, we introduce a few relevant spaces: With the domain $\Omega$ here denoting either $\Omega_b$ or $\Omega_f$, we define the following Sobolev spaces which have lateral periodicity; namely,
$$H_\#^k(\Omega) = \{ \phi \in H^k(\Omega) ~:~ \gamma_j[\phi] \vert_{x_i = 0} = \gamma_j [\phi] \vert_{x_i = 1}; ~ j = 0,1,k-1, ~ i = 1,2 \},$$
where $i$ refers to the indices as described in \figurename{~\ref{fig:domains}}. 
Here, with $\Gamma _{l}$ denoting a lateral side of $\Omega $, $\left. \gamma
_{m}\right\vert _{\Gamma _{j}}$ is the usual Sobolev Trace mapping, restricted to a boundary subdomain;
i.e., for $\varphi \in H^{m}(\Omega )$ sufficiently smooth, $m\geq 1$,%
\[
\left. \gamma _{m-1}(\varphi )\right\vert _{\Gamma _{l}}=\left. \frac{%
\partial ^{m-1} \varphi}{\partial \mathbf{n}^{m-1}}\right\vert _{\Gamma _{l}};
\]%
see e.g., \cite[Corollary 4.3, p.21]{BDM}.
Subsequently, We define the spaces
$$H^k_{\#,*}(\Omega) = \{ f \in H^k_\#(\Omega) ~:~ f \vert_{\Gamma} = 0 \},$$
where, again, $\Omega$ is either $\Omega_f$ (so $\Gamma$ is $\Gamma_f$), or $\Omega$ is $\Omega_b$ (so  $\Gamma$ is $\Gamma_b$).

\smallskip
In addition, we will denote

\begin{align}\nonumber
H_{\#}^{-\frac{1}{2}}(\Omega _{f})= & ~\big\{ T\in \mathcal{D}'(\partial \Omega _{f}):
\left\langle \left. T\right\vert_{\Gamma
_{\alpha}},g\right\rangle _{\Gamma _{\alpha}}=
\left\langle \left. T\right\vert_{\Gamma
_{\beta}},g\right\rangle _{\Gamma _{\beta}},\text{ }\forall g\in
H_{\#}^{1}(\Omega _{f}) \\ 
 &\hskip3cm \text{and for each pair of opposing lateral sides }\Gamma _{\alpha}\text{ and }%
\Gamma _{\beta} \big\}
\end{align}
The analogous definition holds for $H_{\#}^{-\frac{1}{2}}(\Omega _{b})$.

Given these spaces, we will then specify the regularity for the solution variables, together with the boundary conditions away from the boundary interface $\Gamma_I$:
\begin{align*}
    \boldsymbol{\eta} \in \mathbf{H}^1_{\#,*}(\Omega_b), ~~p_b \in H^1_{\#,*}(\Omega_b);  \hskip1cm
    \mathbf{u} \in \mathbf{H}^1_{\#,*}(\Omega_f); \hskip1cm
    w \in H^2_{0}(\omega_p), ~~p_p \in H^{1}(\Omega _{p}). 
    \end{align*}
This regularity will be justified in by the energy identity in the sequel; we will also later specify the regularity of the forcing terms.

\subsection{Energy Identity}

\begin{definition}
    With respect to the Hilbert space $\mathbf{H}^1$, we define the norm and symmetric bilinear form,
    $$\Vert \cdot \Vert_E^2 = a_E(\cdot, \cdot) = (\sigma^E(\cdot), \mathbf{D}(\cdot))_{L^2(\Omega_b)},$$
    which is indeed equivalent to the standard $\mathbf{H}^1$-norm by 
    Korn and Poincar\'e's inequalities \cite{kesavan}.
\end{definition}
To obtain the formal energy identity, we multiply the equations (\ref{*.a})--(\ref{*.f}) by $(\boldsymbol{\eta}_{t}, p_b, w_{t}, p_p, \mathbf{u})$ respectively, and integrate in the appropriate space. Finally we set $\mathbf{F}_b = \mathbf{F}_f = \mathbf{0}$ and $S = 0$ here. Assuming necessary regularity, our formal calculations result in:
\begin{align*}
(E.1)~~~~
    0 &= \mathlarger{(} \rho_b \boldsymbol{\eta}_{tt} - \nabla \cdot \sigma^E(\boldsymbol{\eta}) + \alpha_b \nabla p_b, \boldsymbol{\eta}_{t} \mathlarger{)}_{L^2(\Omega_b)} \\
    &= \frac{1}{2} \frac{d}{dt} \mathlarger{[} \rho_b \Vert \boldsymbol{\eta}_{t} \Vert^2_{\Omega_b} + \Vert \boldsymbol{\eta} \Vert^2_E \mathlarger{]} + \int_{\Gamma_I} \sigma^E(\boldsymbol{\eta}) \mathbf{e}_3 \cdot \boldsymbol{\eta}_{t} d\Gamma_I+ (\alpha_b \nabla p_b, \boldsymbol{\eta}_{t})_{L^2(\Omega_b)} \\
    &= \frac{1}{2} \frac{d}{dt} \mathlarger{[} \rho_b \Vert \boldsymbol{\eta}_{t} \Vert^2_{\Omega_b} + \Vert \boldsymbol{\eta} \Vert^2_E \mathlarger{]} + \int_{\Gamma_I} \sigma^E(\boldsymbol{\eta}) \mathbf{e}_3 \cdot \boldsymbol{\eta}_{t} d\Gamma_I - \cancel{(\alpha_b p_b, \nabla \cdot \boldsymbol{\eta}_{t})_{L^2(\Omega_b)}} - \alpha_b \int_{\Gamma_I} p_b \mathbf{e}_3 \cdot \boldsymbol{\eta}_{t} d\Gamma_I.\\ \\
   (E.2)~~~~ 0 &= \mathlarger{(}\mathlarger{[}c_b p_b + \alpha_b \nabla \cdot \boldsymbol{\eta}\mathlarger{]}_t - \nabla \cdot (k_b \nabla p_b), p_b\mathlarger{)}_{L^2(\Omega_b)} \\
    &= \frac{1}{2} \frac{d}{d t} \mathlarger{[} c_b \Vert p_b \Vert^2_{\Omega_b} \mathlarger{]} + \cancel{(\alpha_b \nabla \cdot \boldsymbol{\eta}_{t}, p_b)_{L^2(\Omega_b)}} + k_b \Vert \nabla p_b \Vert^2_{\Omega_b} + \int_{\Gamma_I} k_b \nabla p_b \cdot \mathbf{e}_3 p_b d\Gamma_I.
    \end{align*}
 In addition, with 
\begin{equation}
F_p = \sigma _{b}\mathbf{e}_{3}\cdot \mathbf{e}_{3}-\sigma _{f}\mathbf{e%
}_{3}\cdot \mathbf{e}_{3},  \label{flux}
\end{equation} 
we have
   \begin{align*}
   (E.3)~~~~ (F_p, w_{t})_{L^2(\omega_p)} &= \mathlarger{(}\rho_p w_{tt} + D \Delta^2_{wp} w + \gamma w + \alpha_p \Delta_{\omega_p}\mathcal{K}(p_p), w_{t}\mathlarger{)}_{L^2(\omega_p)} \\
    &= \frac{1}{2} \frac{d}{dt} \mathlarger{[}\rho_p \Vert w_{t} \Vert^2_{\omega_p} + \gamma \Vert w \Vert^2_{\omega_p} \mathlarger{]} + \mathlarger{(}\Delta_{\omega_p}\mathlarger{[}D \Delta_{\omega_p} w + \alpha_p \mathcal{K}(p_p)\mathlarger{]}, w_{t}\mathlarger{)}_{L^2(\omega_p)} \\
    &= \frac{1}{2} \frac{d}{dt} \mathlarger{[}\rho_p \Vert w_{t} \Vert^2_{\Omega_b} + \gamma \Vert w \Vert^2_{\Omega_b} \mathlarger{]} + \mathlarger{(}D \Delta_{\omega_p} w, \Delta_{\omega_p} w_{t}\mathlarger{)}_{L^2(\omega_p)} + \bcancel{\mathlarger{(}\alpha_p \mathcal{K}(p_p), \Delta_{\omega_p} w_{t} \mathlarger{)}_{L^2(\omega_p)}}. \\ \\
   (E.4)~~~~ 0 &= \mathlarger{(}\mathlarger{[}c_p p_p - \alpha_p \tilde{\mathcal{K}}(\Delta_{\omega_p} w)\mathlarger{]}_t - \partial_s k_p \partial_s p_p, p_p\mathlarger{)}_{L^2(\Omega_p)} \\
    &= \frac{1}{2}\frac{d}{dt}\mathlarger{[} c_p \Vert p_p \Vert^2_{\Omega_p} \mathlarger{]} + k_p \Vert \partial_s p_p \Vert^2_{\Omega_p} - \bcancel{\mathlarger{(}\alpha_p \tilde{\mathcal{K}}(\Delta_{\omega_p} w_t), p_p\mathlarger{)}_{L^2(\Omega_p)}} - \int_{\omega_p^+} k_p (\partial_s p_p) p_p d\omega_p^+ \\
         & ~~~~~~~~ + \int_{\omega_p^-} k_p (\partial_s p_p) p_p  d\omega_p^- . \\ \\
   (E.5)~~~~ 0 &= \mathlarger{(}\rho_f \mathbf{u}_t - \mu_f \Delta \mathbf{u} + \nabla p_f, \mathbf{u}\mathlarger{)}_{\mathbf{L}^2(\Omega_f)} \\
    &= \frac{1}{2}\frac{d}{dt} \mathlarger{[}\rho_f \Vert \mathbf{u} \Vert^2_{\Omega_f} \mathlarger{]} + \mu_f \Vert \nabla \mathbf{u} \Vert^2_{\Omega_f}  - \int_{\Gamma_I} \mu_f \mathbf{e}_3 \cdot \nabla \mathbf{u}  \cdot \mathbf{u} d\Gamma_I + \int_{\Gamma_I} p_f \mathbf{u} \cdot \mathbf{e}_3 d\Gamma_I.
\end{align*}
At this point, we focus on the boundary terms. First off, we utilize that $\mathcal{K}$ and $\tilde{\mathcal{K}}$ are adjoint; as such,
\begin{equation}
\mathlarger{(}\alpha_p \mathcal{K}(p_p), \Delta_{\omega_p} w_{t}\mathlarger{)}_{L^2(\omega_p)} - \mathlarger{(}\alpha_p \tilde{\mathcal{K}}(\Delta_{\omega_p} w_t), p_p\mathlarger{)}_{L^2(\Omega_p)} = 0.  \label{b1}
\end{equation} 
Next we utilize the definition of $\sigma_f$; namely,
$$- \int_{\Gamma_I} \mu_f (\mathbf{e}_3 \cdot \nabla \mathbf{u}) \cdot \mathbf{u} + \int_{\Gamma_I} p_f \mathbf{u} \cdot \mathbf{e}_3 = - \int_{\Gamma_I} \sigma_f \cdot \mathbf{e}_3 \mathbf{u}$$
and the definition of $F_p$ in (\ref{flux}), to see that
\begin{equation}
-\int_{\Gamma _{I}}\sigma _{f}\mathbf{e}_{3}\cdot \mathbf{u}d\Gamma
_{I}-\int_{\Gamma _{I}}(F_p)w_{t}d\Gamma _{I} \\ 
= \int_{\Gamma _{I}}\sigma _{f}\mathbf{e}_{3}\cdot \mathbf{e}%
_{3}[w_{t}-(\mathbf{e}_{3}\cdot \mathbf{u})]d\Gamma _{I}-\int_{\Gamma
_{I}}\sigma _{f}\mathbf{e}_{3}\cdot \mathbf{\tau }(\mathbf{\tau }\cdot 
\mathbf{u})]d\Gamma _{I}  -\int_{\Gamma _{I}}\sigma _{b}\mathbf{e}_{3}\cdot \mathbf{e}%
_{3}w_{t}d\Gamma _{I}.%
\label{b2}
\end{equation}

To this relation, we apply these boundary interface conditions:
\begin{equation} 
w_{t} - (\mathbf{u} \cdot \mathbf{e}_3)\vert_{\omega_p} = k_p \partial_s p_p \vert_{\omega_p^-}; ~~
-\sigma_f \mathbf{e}_3 \cdot \mathbf{e}_3 \vert_{\omega_p} = p_p \vert_{\omega_p^-}; ~~
\beta \mathbf{u} \vert_{\omega_p} \cdot \boldsymbol{\tau} = - \sigma_f \vert_{\omega_p} \mathbf{e}_3 \cdot \boldsymbol{\tau}.
\end{equation}
These applied to (\ref{b2}) yield
\begin{equation}
-\int_{\Gamma _{I}}\sigma _{f}\mathbf{e}_{3}\cdot \mathbf{u}d\Gamma
_{I}-\int_{\Gamma _{I}}(F_p)w_{t}d\Gamma _{I}= -\int_{\omega _{p}^{-}}k_{p}(\partial _{s}p_{p})p_{p}d\omega
_{p}^{-}+\beta \int_{\Gamma _{I}}\left\vert \mathbf{u}\cdot \mathbf{\tau }%
\right\vert ^{2}d\Gamma _{I} -\int_{\Gamma _{I}}\sigma _{b}\mathbf{e}_{3}\cdot \mathbf{e}%
_{3}w_{t}d\Gamma _{I}.%
\label{b4}
\end{equation}
So, at this point, one can combine the relations (E.1)--(E.5) with (\ref{b4}). Using
\begin{eqnarray}
\mathcal{E}(t) &\equiv &\rho _{b}\left\Vert \mathbf{\eta }_{t}(t)\right\Vert
_{\Omega _{b}}^{2}+\left\Vert \mathbf{\eta }(t)\right\Vert
_{E}^{2}+c_{b}\left\Vert p_{b}(t)\right\Vert _{\Omega _{b}}^{2}  \nonumber \\
&&+\rho _{p}\left\Vert w_{t}(t)\right\Vert _{\omega _{p}}^{2}+\left\Vert D^{%
\frac{1}{2}}\Delta _{\omega _{p}}w(t)\right\Vert _{\omega
_{p}}^{2}+c_{p}\left\Vert p_{p}(t)\right\Vert _{\Omega _{p}}^{2}+\rho
_{f}\left\Vert \mathbf{u}(t)\right\Vert _{\Omega _{f}}^{2}.  \label{b6}
\end{eqnarray}
we have, 
for $t>0$,

\begin{eqnarray}
0=&&\frac{1}{2}\frac{d}{dt}\mathcal{E}(t)+\int_{\Gamma _{I}}\sigma _{b}\mathbf{%
e}_{3}\cdot \mathbf{\eta }_{t}d\Gamma _{I}+k_{b}\left\Vert \nabla
p_{b}\right\Vert _{\Omega _{b}}^{2}+k_{p}\left\Vert \partial
_{s}p_{p}\right\Vert _{\Omega _{p}}^{2}+\int_{\Gamma _{I}}k_{b}\nabla p_{b}\cdot \mathbf{e}_{3}p_{b}d\Gamma
_{I}  \nonumber \\
&&-\int_{\omega _{p}^{+}}k_{p}(\partial _{s}p_{p})p_{p}d\omega
_{p}^{+}+\mu _{f}\left\Vert \nabla \mathbf{u}\right\Vert _{\Omega _{f}}^{2} +\beta \int_{\Gamma _{I}}\left\vert \mathbf{u}\cdot \mathbf{\tau }%
\right\vert ^{2}d\Gamma _{I}-\int_{\Gamma _{I}}\sigma _{b}\mathbf{e}%
_{3}\cdot \mathbf{e}_{3}w_{t}d\Gamma _{I}  \nonumber 
 \label{b5}
\end{eqnarray}

To finish the derivation of the energy relation: using the interface BC,
$$\langle 0,0,w \rangle = \boldsymbol{\eta} \vert_{\omega_p},$$
we have that

\begin{equation}
- \int_{\Gamma_I} \sigma_b \mathbf{e}_3 \cdot \boldsymbol{\eta}_{t} d\Gamma_{I} + \int_{\Gamma_I} \sigma_b \mathbf{e}_3 \cdot \mathbf{e}_3 w_{t} d\Gamma_I = 0. \label{b7}
\end{equation}
Moreover, using the interface conditions,
\begin{align*}
k_p \partial_s p_p \vert_{\omega_p^+} &= k_b \partial_{x_3} p_b \vert_{\omega_p}; ~~
p_p \vert_{\omega_p^+} = p_b \vert_{\omega_p},
\end{align*}
we have that
\begin{equation}
- \int_{\omega_p^+} k_p (\partial_s p_p) p_p + \int_{\Gamma_I} k_b \nabla p_b \cdot \mathbf{e_3} p_b = 0. \label{b8}
\end{equation}
Applying (\ref{b7}) and (\ref{b8}) to RHS of (\ref{b5}), we have at last,
\begin{equation}
\frac{1}{2}\frac{d}{dt}\mathcal{E}(t)=-\left[ \beta \int_{\Gamma
_{I}}\left\vert \mathbf{u}\cdot \mathbf{\tau }\right\vert ^{2}d\Gamma
_{I}+\left\Vert k_{b}^{\frac{1}{2}}\nabla p_{b}\right\Vert _{\Omega
_{b}}^{2}+\left\Vert k_{p}^{\frac{1}{2}}\partial _{s}p_{p}\right\Vert
_{\Omega _{p}}^{2}+\mu _{f}\left\Vert \nabla \mathbf{u}\right\Vert _{\Omega
_{f}}^{2}\right] ,  \label{b9}
\end{equation}
where, again, the energy function $\mathcal{E}(t)$ is as given in (\ref{b6}).

This relation demonstrates the conserved quantities under the dynamics which will be captured in the finite-energy (state space) conditions on dynamic solution variables. Additionally, it indicates the dissipation present in the problem, coming from diffusion in each of the three spatial domains, as well as drag due to the Beavers-Joseph-Saffman slip condition.

\section{The Dynamics Operator}
\subsection{Fluid-Pressure Elimination}
We start by following the approach in \cite{AGW} to eliminate the associated fluid-structure pressure. Namely, in part by applying the divergence operator to both sides of the Stokes problem (\ref{*.e}), one can characterize the fluid pressure variable $p_f$---again, periodic on the lateral faces of $\Omega_f$---as the solution of the following boundary value problem:
\begin{equation}
\begin{cases}
\Delta p_{f}=0\text{ \ in }\Omega _{f} \\ 
\partial _{\mathbf{e}_{3}}p_{f}=\mu _{f}\Delta \mathbf{u}\cdot \mathbf{e}_{3}%
\text{ \ on }\Gamma _{f} \\ 
p_{f}=p_{p}(\cdot ,-h)+2\mu _{f}\mathbf{D}\left( \mathbf{u}\right) \mathbf{e}%
_{3}\cdot \mathbf{e}_{3}\text{ \ on }\Gamma _{I}%
\end{cases}
\label{bvp}
\end{equation}
\noindent
and $p_f$ is periodic on the lateral sides of $\Omega$. Above, the second relation comes about by dotting both sides of (\ref{*.e}) by the normal direction, $-\mathbf{e_3}$, and thereafter taking the Dirichlet trace on $\Gamma_f$. The third equation comes from the interface condition
$$-\sigma_f \mathbf{e}_3 \cdot \mathbf{e}_3 \vert_{\omega_p} = p_p \vert_{\omega_p^-}.$$
We formally write the solution of (\ref{bvp}) via the following ``Dirichlet'' and ``Neumann'' maps, which provide harmonic extensions of boundary functions \cite{AT}: 
$$\phi = N_f g \iff \begin{cases}
    \Delta \phi = 0 & \text{in} \ \Omega_f \\
    \partial_{\mathbf{e}_{3}} \phi = - g & \text{on} \ \Gamma_f \\
    \phi = 0 & \text{on} \ \Gamma_I \\
    \phi \text{ is periodic} & \text{on } \partial \Omega _{f}\diagdown (\Gamma _{I}\cup \Gamma _{f}) 
    \end{cases}
\quad \quad
\psi = D_I h \iff \begin{cases}
    \Delta \psi = 0 & \text{in} \ \Omega_f \\
    \partial_{\mathbf{e}_3} \psi = 0 & \text{on} \ \Gamma_f \\
    \psi = h & \text{on} \ \Gamma_I \\
    \psi \text{ is periodic} & \text{on } \partial \Omega _{f}\diagdown (\Gamma _{I}\cup \Gamma _{f}).
\end{cases}$$ 
By elliptic theory, $D_{f}\in \mathcal{L}(H^{-\frac{1}{2}}(\Gamma _{f}),L^{2}(\Omega _{f}))$ and 
$N_{f}\in \mathcal{L}(H^{-\frac{1}{2}}(\Gamma _{f}),L^{2}(\Omega _{f}))$, valid here since the bulk domains are toroidal, i.e., smooth and devoid of corners or meeting mixed boundary conditions;
see \cite{L-M}.
Therewith, we define the following operators $\Pi_i$,
$$\Pi_1 p_p = D_I[p_p \vert_{\omega_p^-}], \ \Pi_2 \mathbf{u} = D_I[(\mathbf{e}_3 \cdot [ 2 \mu_f \mathbf{D}(\mathbf{u}) \mathbf{e}_3])_{\Gamma_I}] + N_f[(\mu_f \Delta \mathbf{u} \cdot \mathbf{e}_3)_{\Gamma_f}],$$
with respective Green's maps are $G_i = - \nabla \Pi_i$. Through the agency of these maps, the incompressible pressure variable, as
the solution of (\ref{bvp}), has the representation 
\begin{equation}
p_{f}= \Pi_{1}(p_{p})+\Pi_{2}(\mathbf{u}).  \label{pf}
\end{equation}
Moreover, the Stokes PDE evolution (\ref{*.e}) (taken with zero forcing
term) can be written as%
\begin{equation}
\rho _{f}\mathbf{u}_{t}=\mu _{f}\Delta \mathbf{u}+G_{1}(p_{p})+G_{2}(\mathbf{%
u}).  \label{sub}
\end{equation}

\subsection{Constituent Spaces}
In the following, the subscripts $b$, $p$, and $f$ will be attached to spaces and variables which pertain to the Biot, poroplate, and Stokes systems, respectively.
With respect to the plate pressure equation, it will be useful, for the purposes of semigroup generation, to quantify regularity in the vertical direction, versus the horizontal directions. To this end, we will utilize the following anisotropic Sobolev spaces and corresponding norms:
$$H^{0,0,k}(\Omega_p) = \left\{ \phi \in L^2(\Omega_p) \ : \ \partial_s \phi, \dots, \partial_s^k \phi \in L^2(\Omega_p) \right\}$$
$$\Vert \phi \Vert_{H^{0,0,k}(\Omega_p)} = \left( \sum_{i=0}^k \vert \partial_s^i \phi \vert_{L^2(\Omega_p)} \right)^{1/2}.$$

To formally proceed, we will need to discuss $s$-traces for functions in the above anisotropic Sobolev spaces.

\begin{lemma}
If  $q\in H^{0,0,1}(\Omega _{p})$, then $\left.
q\right\vert _{\omega _{p}^{+}}$ is well-defined in $H^{-%
\frac{1}{2}}(\omega _{p}^{+})$. (Mutatis mutandis, for $\omega_p^{-}$.)
\end{lemma}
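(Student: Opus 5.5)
The plan is to prove something slightly stronger: the $s$-trace of $q$ lies in $L^2(\omega_p^{+})$, and the claim then follows from the continuous embedding $L^2(\omega_p^{+})\hookrightarrow H^{-\frac{1}{2}}(\omega_p^{+})$. The point is that $H^{0,0,1}(\Omega_p)$ controls one full derivative in the inflation variable $s$, and point evaluation in one dimension is bounded on $H^1$. So no regularity in the lateral variables $x'=(x_1,x_2)$ is needed at all.

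\emph{Step 1 (abstract identification).} Writing $\Omega_p=\omega_p\times(-\frac{h}{2},\frac{h}{2})$, I will identify $H^{0,0,1}(\Omega_p)$ isometrically with the Bochner--Sobolev space $H^1\big(-\tfrac{h}{2},\tfrac{h}{2};L^2(\omega_p)\big)$. By Fubini, $L^2(\Omega_p)\cong L^2(-\frac h2,\frac h2;L^2(\omega_p))$. I then check that the distributional derivative $\partial_s q\in L^2(\Omega_p)$ coincides with the vector-valued weak derivative of $s\mapsto q(\cdot,s)$. To do this, I test against products $\varphi(s)\psi(x')$ with $\varphi\in C_c^\infty(-\frac h2,\frac h2)$ and $\psi\in C_c^\infty(\omega_p)$, and use that such $\psi$ are dense in $L^2(\omega_p)$. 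I expect this identification to be the only delicate point, and it is routine.

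\emph{Step 2 (continuity and trace bound).} For $X$-valued $H^1$ functions on a bounded interval, with $X$ a Hilbert space, I will use the standard facts that the function has an absolutely continuous representative in $C([-\frac h2,\frac h2];X)$ and that
\begin{equation*}
q(\tfrac h2)=q(s)+\int_s^{h/2}\partial_s q(\sigma)\,d\sigma \quad \text{in } L^2(\omega_p).
\end{equation*}
Averaging this identity over $s\in(-\frac h2,\frac h2)$ and applying Cauchy--Schwarz gives
\begin{equation*}
\|q(\cdot,\tfrac h2)\|_{L^2(\omega_p)}^2\le C_h\big(\|q\|_{L^2(\Omega_p)}^2+\|\partial_s q\|_{L^2(\Omega_p)}^2\big),\qquad C_h=2\max\{h^{-1},h\}.
\end{equation*}
I therefore define $q|_{\omega_p^{+}}:=q(\cdot,\frac h2)$. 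This is well defined, meaning it is independent of the representative, because the continuous representative is unique.

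\emph{Step 3 (consistency and conclusion).} I will note that for $q\in C^1(\overline{\Omega_p})$ the definition agrees with the pointwise restriction. Hence the map $q\mapsto q|_{\omega_p^{+}}$ is the unique bounded extension of the classical trace from $H^{0,0,1}(\Omega_p)$ into $L^2(\omega_p^{+})$. Composing with the embedding into $H^{-\frac12}(\omega_p^{+})$, which is the dual of $H^{\frac12}_{00}$ or $H^{\frac12}$ with pivot $L^2$, yields the statement. The case of $\omega_p^{-}$ is identical, with the integral taken from $-\frac h2$ to $s$ instead.

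If one prefers to avoid the vector-valued framework, the alternative is a density argument. This would proceed by reflecting $q$ evenly across $s=\pm\frac h2$ and mollifying in $s$ only, which commutes with $\partial_s$, and then mollifying in $x'$. The FTC estimate of Step 2 would then be proved for these smooth approximants and passed to the limit. Either route reduces the statement to the one-dimensional Sobolev trace, uniformly in $x'$.
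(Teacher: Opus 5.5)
Your argument is correct, and it proves more than the lemma states.

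The paper argues by duality. For $g\in H_0^{\frac12+\epsilon}(\omega_p^{+})$ it extends $g$ by zero to $\partial\Omega_p$ and lifts it to $H^1(\Omega_p)$ with a right inverse $\gamma_0^{+}$ of the full Sobolev trace. It then integrates by parts in $s$ only, giving $\big|\int_{\omega_p^{+}}qg\big|\le C\|q\|_{H^{0,0,1}(\Omega_p)}\|g\|_{H^{\frac12}}$. Extension by continuity then defines $q|_{\omega_p^{+}}$ as an element of $H^{-\frac12}(\omega_p^{+})$.

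You instead use the product structure $\Omega_p=\omega_p\times(-\frac h2,\frac h2)$ directly, via $H^{0,0,1}(\Omega_p)\cong H^1\big(-\frac h2,\frac h2;L^2(\omega_p)\big)\hookrightarrow C\big([-\frac h2,\frac h2];L^2(\omega_p)\big)$. This gives the trace in $L^2(\omega_p^{\pm})$ with an explicit constant, and $H^{-\frac12}$ only enters through the final embedding.

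Your route buys two things:
\begin{itemize}
\item It shows that the loss to $H^{-\frac12}$ in the paper comes only from using an $H^1$ lift whose lateral regularity is never needed. A lift of the form $g(x')\chi(s)$, with a cutoff $\chi$, would already give your $L^2$ bound.
\item It avoids the density of smooth functions in $H^{0,0,1}(\Omega_p)$ that the paper's extension by continuity tacitly uses. Your trace is simply the value of the unique continuous representative.
\end{itemize}

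The paper's route has the advantage that the integration-by-parts formula in $s$ is built into the definition of the trace. That is how the trace is used afterwards: for the trace of $\partial_s q$ when $q\in H^{0,0,2}(\Omega_p)$, and in the computations (R.4) and (E.4). Your framework delivers these at no extra cost, however. If $q\in H^{0,0,2}(\Omega_p)$, then $\partial_s q\in H^{0,0,1}(\Omega_p)$, so $\partial_s q|_{\omega_p^{\pm}}\in L^2(\omega_p^{\pm})$. Green's formula in $s$ for $q,\vartheta\in H^{0,0,1}(\Omega_p)$ follows from the product rule for the scalar $W^{1,1}$ function $s\mapsto (q(\cdot,s),\vartheta(\cdot,s))_{L^2(\omega_p)}$.
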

\begin{proof}
 Indeed, \ let $\mathbf{n}_{p}=[n_{1},n_{2},n_{s}]$ denote
the exterior unit normal with respect to geometry $\Omega _{p}$; let $\gamma
_{0}^{+}\in \mathcal{L}(H^{\frac{1}{2}}(\partial \Omega _{p}),H^{1}(\Omega
_{p}))$ be the right continuous inverse of the zero order Sobolev trace
mapping, $\gamma _{0}\in \mathcal{L}(H^{1}(\Omega _{p}),H^{\frac{1}{2}%
}(\partial \Omega _{p}))$---see \cite{kesavan} or \cite[Theorem 3.38]{Mc}. Therewith,
given $g\in H_{0}^{\frac{1}{2}+\epsilon }(\omega _{p}^{+})$, we denote its
extension by zero as $g_{ext}$ onto the rest of $\partial \Omega _{p}$. As
such, we then have, via integration by parts in $s$, the identity:
\[
\int_{\omega _{p}^{+}}qgd\omega _{p}^{+}=\int_{\partial \Omega
_{p}}qg_{ext}n_{s}d \partial \Omega _{p}=\int_{\Omega _{p}}(\partial
_{s}q)\gamma _{0}^{+}(g_{ext})d \Omega _{p}+\int_{\Omega
_{p}}q(\partial _{s}\gamma _{0}^{+}(g_{ext}))d \Omega _{p},
\]%
whence we obtain through the surjectivity of the trace operator, after an extension by continuity, the well-defined map,%
\begin{equation}
q\in H^{0,0,1}(\Omega _{p})\text{ }\Longrightarrow \left\{ \left.
q\right\vert _{\omega _{p}^{+}}\in H^{-\frac{1}{2}}(\omega _{p}^{+})\text{
and }\left. q\right\vert _{\omega _{p}^{-}}\in H^{-\frac{1}{2}}(\omega
_{p}^{-})\right\} 
 \label{trace}
\end{equation}
\end{proof}
In the same manner, one can obtain the higher order trace mapping,%
\begin{equation}
q\in H^{0,0,2}(\Omega _{p}) \Longrightarrow \left\{ \left. \partial
_{s}q\right\vert _{\omega _{p}^{+}}\in H^{-\frac{1}{2}}(\omega _{p}^{+})%
\text{ and }\left. \partial _{s}q\right\vert _{\omega _{p}^{-}}\in H^{-\frac{%
1}{2}}(\omega _{p}^{-})\right\} .  \label{trace2}
\end{equation}
 \smallskip
In addition, if $q\in H^{0,0,2}(\Omega _{p})$, then the functional $\left.
\partial _{s}q\right\vert _{\omega _{p}^{+}}$ has the action, 
\[
\vartheta \rightarrow \int_{\omega _{p}^{+}}\partial _{s}q\vartheta d\omega
_{p}^{+}=\int_{\Omega _{p}}\partial _{s}^{2}q\vartheta d\Omega
_{p}+\int_{\Omega _{p}}q\partial _{s}\vartheta d\Omega _{p},\text{ \ for all 
}\vartheta \in H^{0,0,1}(\Omega _{p})
\]%
and likewise for $\left. \partial _{s}q\right\vert _{\omega _{p}^{+}}$. Thus, in a similar fashion
$
\left. \partial _{s}q\right\vert _{\omega _{p}^{+}},\left. \partial
_{s}q\right\vert _{\omega _{p}^{-}}\in \lbrack \gamma_0[H^{0,0,1}(\Omega _{p})]]'$

The above lemma allows us to handle traces on the anisotropic Sobolev spaces relevant to the plate pressure dynamics, and, in the typical way (e.g., \cite[Ch.2]{kesavan}) justify integration by parts in  $s$.

For the associated space of finite energy for the dynamics, we now set
\begin{align}
    \mathbf{V} &\equiv \{ \mathbf{u} \in L^2(\Omega_f) : \nabla \cdot \mathbf{u} = 0 \ \text{in} \ \Omega_f; ~~\mathbf{u} \cdot \mathbf{n} = 0 \ \text{on} \ \Gamma_f\}; \nonumber \\
    \mathbf{X_0} &\equiv \mathbf{H}^1_{\#,*}(\Omega_b) \times \mathbf{L}^2(\Omega_b) \times L^2(\Omega_b) \times H_0^2(\omega_p) \times L^2(\omega_p) \times L^2(\Omega_p) \times \mathbf{V}; \nonumber \\
    \mathbf{X} & \equiv \left\{ \left[ \mathbf{\eta },\zeta ,p_{b,}w,v,p_{p},\mathbf{u}\right]
     \in \mathbf{X_{0}} :\mathbf{\eta } =w\mathbf{e}_{3}\text{ on }\Gamma _{I}\right\}. \label{space}
\end{align}
We will topologize the energy space $\mathbf{X}$ with the  inner product: For $\mathbf{y}_{i}=\left[ \mathbf{\eta }_{i},\zeta
_{i},p_{b,i},w_{i},v_{i},p_{p,i},\mathbf{u}_{i}\right]^T$ in $\mathbf{X}$, 
\begin{align}
(\mathbf{y}_1, \mathbf{y}_2)_{X} &= a_{E}(\boldsymbol{\eta}_1, \boldsymbol{\eta}_2) + \rho_b (\boldsymbol{\zeta}_1, \boldsymbol{\zeta}_2)_{L^2(\Omega_b)} + c_b(p_{b,1}, p_{b,2})_{L^2(\Omega_b)} \nonumber \\ 
& + \rho_p (v_1, v_2)_{L^2(\omega_p)} + D( \Delta w_1, \Delta w_2)_{L^2(\omega_p)} + \gamma (w_1, w_2)_{L^2(\omega_p)}  + c_p (p_{p,1}, p_{p,2})_{L^2(\Omega_p)} \nonumber \\ 
&+ \rho_f (\mathbf{u}_1, \mathbf{u}_2)_{L^2(\Omega_f)}.  \label{energy}
\end{align}
\subsection{Operator and Domain} 
On the above energy space, we now proceed to define the multilayered Biot-Stokes dynamics operator $\mathbf{A}:\mathbf{X}\rightarrow \mathbf{X}$
  as follows:
\begin{definition}(Domain of $\mathbf{A}$)\label{def:domain}
    Let $\mathbf{y} = [\boldsymbol{\eta}, \boldsymbol{\zeta}, p_b, w, v, p_p, \mathbf{u}]^T \in \mathbf{X}$, then $\mathbf{y} \in \mathcal{D}(\mathbf{A})$ if and only if:  \newline {\bf (i)} The regularity conditions hold:
    \begin{itemize}
        \item [(A.1)] $\nabla \cdot \sigma^E(\boldsymbol{\eta}) \in \mathbf{L}^2(\Omega_b)$
        \item [(A.2)] $\boldsymbol{\zeta} \in \mathbf{H}^1_{\#,*}(\Omega_b)$
        \item [(A.3)] $p_b \in H^1_\#(\Omega_b)$ and $\sigma _{b}(\mathbf{\eta },p_{b})\mathbf{e}_{3}\in   \mathbf{H}_{\#}^{-\frac{1}{2}}( \Omega _{b})$)
         \item [(A.4)] $\Delta p_b \in L^2(\Omega_b)$ and $\nabla p_{b}\cdot \mathbf{n_f}\in H_{\#}^{-\frac{1}{2}}(\partial \Omega_{b})$
        \item [(A.5)] $v \in H_0^2(\omega_p)$
        \item [(A.6)] $p_p \in H^{0,0,2}(\Omega_p)$ (and so (\ref{trace}) and (\ref{trace2}) apply)
        \item [(A.7)] $\mathbf{u} \in \mathbf{H}_\#^1(\Omega_f)$.
    \end{itemize}
 {\bf (ii) }   The following pressure sub-problem regularity holds: $\exists \pi \in L^2(\Omega_f)$, such that
    \begin{itemize}
        \item [(A.8)] $\mu_f \Delta \mathbf{u} - \nabla \pi \in \mathbf{V}$ (and so this item and (A.7) yield $\sigma _{f}(\mathbf{u},\pi )\mathbf{e}_{3}\in \mathbf{H}^{-\frac{1}{2}%
         }(\partial \Omega _{f})$)
        \item [(A.9)] $2 \mu_f \mathbf{D}(\mathbf{u}) \vert_{ \partial \Omega_f} \in \mathbf{H}_\#^{-1/2}( \Omega_f)$
        \item [(A.10)] $\pi \vert_{\partial \Omega_f} \in H_\#^{-1/2}(\Omega_f)$
        \item [(A.11)] $\Delta \mathbf{u} \cdot \mathbf{n} \vert_{\Gamma_f} \in H^{-3/2}( \Gamma_f)$.
    \end{itemize}
  {\bf   (iii)} The additional additional regularity requirement, with respect to the plate component (well-defined because of (A.3) and (A.8)) holds:
    \begin{itemize}
        \item [(A.12)] $D\Delta _{\omega _{p}}^{2}w+\alpha _{p}\Delta _{\omega _{p}}\mathcal{K}%
(p_{p})+\left. \sigma _{f}(\mathbf{u},\pi )\mathbf{e}_{3}\cdot \mathbf{e}%
_{3}\right\vert _{\omega _{p}}-\left. \sigma _{b}(\mathbf{\eta },p_{b})%
\mathbf{e}_{3}\cdot \mathbf{e}_{3}\right\vert _{\omega _{p}}\in L^{2}(\omega
_{p})$.

    \end{itemize}
{\bf (iv)}   The following boundary interface conditions hold: 
    \begin{itemize}
        \item [(A.13)] $\langle 0, 0, v \rangle = \boldsymbol{\zeta} \vert_{\omega_p} \in \mathbf{H}^{1/2}(\Gamma_I)$
        \item [(A.14)] $\beta \mathbf{u}\vert_{\omega_p} \cdot \boldsymbol{\tau} = -\sigma _{f}(\mathbf{u},\pi )\mathbf{e}_{3}\cdot \boldsymbol{\tau} \vert_{\omega_p} \in H^{-1/2}(\Gamma_I)$
        \item [(A.15)] $k_p \partial_s p_p \vert_{\omega_p^+} = k_b \partial_{x_3} p_b \vert_{\omega_p} \in H^{-1/2}(\Gamma_I)$
        \item [(A.16)] $p_p\vert_{\omega_p^+} = p_b \vert_{\omega_p} \in H^{1/2}(\Gamma_I)$
        \item [(A.17)] $v - (\mathbf{u} \cdot \mathbf{e}_3)\vert_{\omega_p} = k_p \partial_s p_p \vert_{\omega_p^-} \in H^{-1/2}(\Gamma_I)$
        \item [(A.18)] $-\sigma _{f}(\mathbf{u},\pi )\mathbf{e}_{3} \cdot \mathbf{e}_3 \vert_{\omega_p} = p_p \vert_{\omega_p^-} \in H^{-1/2}(\Gamma_I)$.
    \end{itemize}
\end{definition}
\begin{remark}[Regularity of Traces] We note that in the matching interface conditions above, the identification of traces is done at the level of the most regular of the two, as inherited from prescribed properties of the energy space. All traces are defined at least distributionally on $\mathcal D(\mathbf A)$, and matched accordingly. Additional regularity on the domain can then be inferred.
\end{remark}

 Having specified the domain of $\mathbf{A}:\mathbf{X}\rightarrow \mathbf{X}$,
then  given $\mathbf{y} = [\boldsymbol{\eta}, \boldsymbol{\zeta}, p_b, w, v, p_p, \mathbf{u}]^T \in \mathcal{D}(\mathbf{A})$, we in turn prescribe its action:
\begin{equation}
\mathbf{A} \mathbf{y} = \begin{bmatrix}
    \boldsymbol{\zeta} \\
    \rho_b^{-1} \nabla \cdot \sigma^{E}(\boldsymbol{\eta}) - \rho_b^{-1} \alpha_b \nabla p_b \\
    - c_b^{-1} \alpha_b \nabla \cdot \boldsymbol{\zeta} + c_b^{-1} k_b \Delta p_b \\
    v \\
    \rho_p^{-1} (F_p - D \Delta_{\omega_p}^2 w - \gamma w - \alpha_p \Delta_{\omega_p} \mathcal{K}(p_p)) \\
    c_p^{-1} \alpha_p\tilde{\mathcal{K}}(\Delta v) + c_p^{-1} k_p \partial_{s}^2 p_p \\
    \rho_f^{-1}[\mu_f \Delta \mathbf{u} - \nabla \pi],
\end{bmatrix} \in \begin{bmatrix}
    \mathbf{H}^1_{\#,*}(\Omega_b) \\ \mathbf{L}^2(\Omega_b) \\ L^2(\Omega_b) \\ H_0^2(\omega_p) \\ L^2(\omega_p) \\ H^{0,0,1}(\Omega_p) \\ \mathbf{V}
\end{bmatrix}. \label{mat}
\end{equation}
Here, the difference term $F_p$ is as given in (\ref{flux}); recall
\begin{equation}
F_p=\left. \sigma _{b}(\mathbf{\eta },p_{b})\mathbf{e}_{3}\cdot \mathbf{e}%
_{3}\right\vert _{\omega _{p}}-\left. \sigma _{f}(\mathbf{u},\pi )\mathbf{e}%
_{3}\cdot \mathbf{e}_{3}\right\vert _{\omega _{p}}  \label{flux2}
\end{equation}
Moreover, $\pi = \Pi_1 p_p + \Pi_2 \mathbf{u} \in L^2(\Omega_f)$, where  $\Pi_i$ are as defined in (\ref{pf}); indeed, $\pi$, as the variable identified in (A.8) of the domain definition, is \textit{a fortiori} the solution to the fluid-pressure sub-problem (\ref{bvp}) \cite{AGW}. \\ \\
\indent In sum, the wellposedness question for the multilayered Biot-Stokes system (\ref{*.a})--(\ref{*.l}) becomes this abstract Cauchy problem: Given $\mathbb{F} = [0, \mathbf{F}_b, S, 0, 0, 0, \mathbf{F}_f]^T \in L^2(0,T;\mathbf{X})$, find $\mathbf{y} = [\boldsymbol{\eta}, \boldsymbol{\zeta}, p_b, w, v, p_p, \mathbf{u}]^T \in \mathcal{D}(\mathbf{A})$ such that
\begin{equation}
\frac{d}{dt}\mathbf{y}=\mathbf{Ay}+\mathbb{F}\text{, }~~\mathbf{y}(0)=\mathbf{y%
}_{0} \text{~ on~}(0,T) \text{.}  \label{cauchy}
\end{equation}

Accordingly, we proceed to establish strongly continuous contraction semigroup generation for\newline $\mathbf{A}:\mathcal{D}(\mathbf{A}) \subset \mathbf{X}\rightarrow \mathbf{X}$, to which end,  it is enough, by the Lumer-Phillips Theorem \cite{kesavan,pazy}, to establish its dissipativity (Section \ref{dissss}) and maximality (Section \ref{maxx}). The existence of strong and mild (semigroup) solutions follow then for the Cauchy problem in the standard fashion by direct invocation of Lumer-Phillips.

\section{Precise Statement of Main Results}

Our central supporting theorem is that of semigroup generation for  $\cA$. 
\begin{theorem}
\label{th:main1}
    Let parameters $c_b, c_p  > 0$ in the Biot-Plate-Stokes system (\ref{*.a})--(\ref{*.l}). Then the associated modeling operator $\mathcal A$ on $X$,defined by \eqref{mat}, with domain $\mathcal D(\mathcal A)$ given in Definition \ref{def:domain}, is the generator of a strongly continuous semigroup $\{e^{\cA t}: t\geq 0\}$ of contractions on $X$. Thus, for $\mathbf y_0 \in \mathcal D(\mathcal A)$, we have $e^{\mathcal A\cdot}\mathbf y_0 \in C([0,T];\mathcal D(\mathcal A))\cap C^1((0,T);X)$ satisfying \eqref{cauchy} in the {\em strong sense} with $\mathcal F=[\mathbf 0,\mathbf 0, 0, 0, 0, 0, \mathbf 0]^T$; similarly, for $\mathbf y_0 \in X$, we have $e^{\mathcal A\cdot}\mathbf y_0 \in C([0,T];X)$ satisfying \eqref{cauchy} in the {\em generalized sense} with $\mathcal F=[\mathbf 0, \mathbf 0, 0, 0, 0,\mathbf 0]^T$.
\end{theorem}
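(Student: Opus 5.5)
By the Lumer--Phillips theorem it suffices to show three things: $\mathcal A$ is densely defined, it is dissipative on $X$, and $\lambda I-\mathcal A$ is surjective for some (hence all) $\lambda>0$. Density follows from the maximal dissipativity itself, since $X$ is a Hilbert space and so reflexive. The strong and generalized solution statements are then the standard semigroup consequences.

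\emph{Dissipativity.} For $\mathbf y\in\mathcal D(\mathcal A)$ we compute $(\mathcal A\mathbf y,\mathbf y)_X$ using the inner product (\ref{energy}). This repeats the formal computation (E.1)--(E.5) rigorously, with time derivatives replaced by the components of $\mathcal A\mathbf y$. Each integration by parts is justified by the regularity (A.1)--(A.11): Green's formulas hold in the $H^{-1/2}$--$H^{1/2}$ duality, and the $s$-integration in $\Omega_p$ uses the trace results (\ref{trace}) and (\ref{trace2}). The interface conditions (A.13)--(A.18) play the roles of (\ref{*.g})--(\ref{*.l}), and the adjointness of $\mathcal K,\tilde{\mathcal K}$ gives the cancellation (\ref{b1}). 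The cross terms $\pm\alpha_b(p_b,\nabla\cdot\boldsymbol\zeta)$ cancel as well, while periodicity and the Dirichlet conditions on $\Gamma_b,\Gamma_f$ kill the other boundary terms. This yields the analogue of (\ref{b9}),
\[
\mathrm{Re}(\mathcal A\mathbf y,\mathbf y)_X=-\beta\|\mathbf u\cdot\boldsymbol\tau\|^2_{\Gamma_I}-k_b\|\nabla p_b\|^2_{\Omega_b}-k_p\|\partial_sp_p\|^2_{\Omega_p}-\mu_f\|\nabla\mathbf u\|^2_{\Omega_f}\le 0 .
\]
Here the pressure $\pi$ from (A.8) is used in place of $p_f$, and $\nabla\cdot\mathbf u=0$ removes the volume pressure term.

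\emph{Maximality.} Given $\mathbf F=[\mathbf f_1,\dots,\mathbf f_7]\in X$ and $\lambda>0$, we solve $(\lambda-\mathcal A)\mathbf y=\mathbf F$. We eliminate $\boldsymbol\zeta=\lambda\boldsymbol\eta-\mathbf f_1$ and $v=\lambda w-f_4$. This gives a static coupled elliptic system in $(\boldsymbol\eta,p_b,w,p_p,\mathbf u,\pi)$ with $\lambda^2\rho_b\boldsymbol\eta-\nabla\cdot\sigma^E(\boldsymbol\eta)+\alpha_b\nabla p_b=\cdots$ and the analogous equations for the other components. Since $c_b,c_p>0$, every diagonal block carries a zero-order term. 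We then build a mixed variational formulation. The primal space $\Sigma$ consists of tuples $(\boldsymbol\eta,p_b,w,p_p,\mathbf u)$ in $\mathbf H^1_{\#,*}(\Omega_b)\times H^1_{\#,*}(\Omega_b)\times H^2_0(\omega_p)\times H^{0,0,1}(\Omega_p)\times\mathbf H^1_{\#,*}(\Omega_f)$, subject to the constraints $\boldsymbol\eta|_{\Gamma_I}=w\mathbf e_3$ and $p_p|_{\omega_p^+}=p_b|_{\Gamma_I}$. Divergence-freeness of $\mathbf u$ and the kinematic condition (A.17) are not imposed in $\Sigma$. Instead we enforce them with Lagrange multipliers, namely $\pi\in L^2(\Omega_f)$ for $\nabla\cdot\mathbf u=0$ and, if needed, an interface multiplier in $H^{-1/2}(\Gamma_I)$ for the flux condition. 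To keep the bilinear form coercive, we test the $w$-equation with $\lambda w$ and the $\boldsymbol\eta$-equation with $\lambda\boldsymbol\eta$, as in \cite{AGW}. With this scaling, the skew couplings $\alpha_b(p_b,\nabla\cdot\boldsymbol\eta)$, the $\mathcal K/\tilde{\mathcal K}$ terms and the interface terms $\langle p_p|_{\omega_p^-},\lambda w-\mathbf u\cdot\mathbf e_3\rangle$ cancel in the real part. The diagonal part is then $H^1\times H^1\times H^2\times H^{0,0,1}\times H^1$-coercive, using Korn, the $\gamma$-term, and the $c_p$ and $k_p$ terms. The inf-sup condition for $\pi$ follows from surjectivity of the divergence from $\mathbf H^1_{\#,*}(\Omega_f)$ onto $L^2(\Omega_f)$, since the interface trace is free. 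The Babu\v{s}ka--Brezzi lemma then gives a unique weak solution.

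\emph{Recovery of the domain.} We expect this to be the main obstacle. Choosing test functions supported in each subdomain recovers the PDEs in the distributional sense, and hence (A.1), (A.4), (A.6), (A.8) and $\Delta p_b\in L^2$. The $s$-equation gives $\partial_s^2p_p\in L^2$, because $\tilde{\mathcal K}\Delta v$ lies in $L^2$ only if $v\in H^2_0$, which holds since $v=\lambda w-f_4$ with $f_4\in H^2_0$. We then use general test functions and integrate by parts back, relying on the generalized Green formulas and the trace lemmas above. This identifies the weak interface conditions (A.14)--(A.18), the regularity of the traces (A.9)--(A.11), and the plate-combination condition (A.12). In (A.12) the quantity $D\Delta^2w+\alpha_p\Delta\mathcal K(p_p)+\sigma_f\mathbf e_3\cdot\mathbf e_3-\sigma_b\mathbf e_3\cdot\mathbf e_3$ equals $\rho_p(f_5-\lambda v)-\gamma w\in L^2$. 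We must also verify that $\pi$ coincides with $\Pi_1p_p+\Pi_2\mathbf u$. We do this by taking the divergence and normal traces of the Stokes equation, as in \cite{AGW}, which yields (\ref{bvp}). The delicate point is the simultaneous two-sided coupling, where traces are defined only in the anisotropic sense on $\omega_p^\pm$, so every identification has to be made at the $H^{-1/2}$ level.
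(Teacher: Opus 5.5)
Your proposal is correct and follows the paper's proof. The paper's route is Lumer--Phillips, with dissipativity obtained by integrating by parts on $\mathcal D(\mathbf A)$ using (A.13)--(A.18), and maximality via the mixed formulation on the same space $\Sigma$ with $\pi\in L^2(\Omega_f)$ as the only multiplier, followed by Babu\v{s}ka--Brezzi and an a posteriori recovery of $\mathcal D(\mathbf A)$. Two small points. First, no extra interface multiplier is needed: (A.17) is a natural condition that enters through the skew terms $\int_{\Gamma_I}[w-\mathbf u\cdot\mathbf e_3]q_p(\cdot,-h/2)-\int_{\Gamma_I}p_p(\cdot,-h/2)[\xi-\boldsymbol\delta\cdot\mathbf e_3]$, exactly as your coercivity step already uses, and the paper takes $\lambda=1$, so your $\lambda$-rescaling is unnecessary. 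Second, because the interface conditions are written with $\sigma_f(\mathbf u,\pi)=2\mu_f\mathbf D(\mathbf u)-\pi\mathbf I$, the fluid dissipation is $2\mu_f\|\mathbf D(\mathbf u)\|^2_{\Omega_f}$ rather than $\mu_f\|\nabla\mathbf u\|^2_{\Omega_f}$; the two differ by a boundary term on $\Gamma_I$ since $\mathbf u$ need not vanish there, which does not affect the sign.
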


As a corollary, we  obtain strong and mild (or generalized) solutions to \eqref{*.a}--\eqref{*.l}, including  in the non-homogeneous case with generic forces. By strong solutions to  \eqref{*.a}--\eqref{*.l} we mean  \eqref{*.a}--\eqref{*.l}  hold  point-wise in time; mild solutions satisfy the time integrated (variation of parameters form) of  \eqref{*.a}--\eqref{*.l}. Through limiting procedures (as in \cite{AW} following \cite{AGW})---using smooth data for the semigroup when $c_b, c_p>0$, and subsequently using a ``vanishing compressibility" argument as $c_b, c_p \searrow 0$---we can obtain weak solutions for all $c_b, c_p \ge 0$. Furthermore as in \cite{AGW,AW} these weak solutions are well-posed.

Finally we address well-posedness of the system in the presence of the von K\'arm\'an plate non-linearity. This will be shown to be a perturbation of the Cauchy problem (\ref{cauchy}) with a locally Lipschitz forcing function $\mathcal{F}(\mathbf{y})$.
\begin{equation}
\frac{d}{dt}\mathbf{y}=\mathbf{Ay} + \mathcal{F}(\mathbf{y})\text{, }\mathbf{y}(0)=\mathbf{y%
}_{0} \text{~ on~}(0,T) \text{.}  \label{cauchy_vk}
\end{equation}
From this perturbation local well-posedness is obtained, and global (in time) well-posedness follows from an \emph{a priori} energy identity that ensures solutions are bounded in the energy norm. 

\begin{theorem}\label{th:main4}
For any $T>0$, the perturbed Cauchy problem (\ref{cauchy_vk}) is semigroup well-posed on $\mathbf{X}$ for all $[0,T]$. That is to say
that the PDE problem in \eqref{*.a}--\eqref{*.l}, taking into account
the nonlinear plate equation \eqref{nlp}, is well-posed on $[0,T]$ in the sense of finite-energy
mild solutions.

Moreover, we have the \emph{%
global-in-time} estimate for solutions: For any $T>0$
\begin{equation}  \label{globalbound}
\sup_{t \in [0,T]} \mathcal{E}(\bm{\eta}(t), \bm{\zeta}(t),p_b(t),w_{t}(t), v(t), p_p(t), \mathbf{u}(t)) \le 
\mathbf{C}(\bm{\eta}_0, \bm{\zeta}_0, p_{b,0}, w_0, v_0, p_{p,0}, \mathbf{u}_0, F_0).
\end{equation}
\end{theorem}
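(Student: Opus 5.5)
The plan is to treat the von K\'arm\'an term as a locally Lipschitz perturbation of the contraction semigroup from Theorem \ref{th:main1}. The nonlinear plate equation \eqref{nlp} replaces $D\Delta^2_{\omega_p}w+\gamma w$ by the same expression plus $-[w,v(w)+F_0]$, where $v(w)$ is the Airy stress function solving $\Delta^2 v(w)=-[w,w]$ with clamped data on $\Gamma_c$. Accordingly, I set
\[
\mathcal F(\mathbf y)=[\mathbf 0,\mathbf 0,0,0,\rho_p^{-1}[w,v(w)+F_0],0,\mathbf 0]^T .
\]
This acts only in the plate-velocity slot and depends only on the displacement $w\in H^2_0(\omega_p)$.

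\textbf{Step 1: local Lipschitz property.} I will show that $\mathcal F:\mathbf X\to\mathbf X$ is locally Lipschitz. For this I will quote the standard sharp regularity of the Airy map: $\|v(w)\|_{W^{2,\infty}(\omega_p)}\le C\|w\|^2_{H^2(\omega_p)}$ (see Chueshov--Lasiecka). Together with the bilinear estimate $\|[u,z]\|_{L^2}\le C\|u\|_{H^2}\|z\|_{W^{2,\infty}}$, this gives
\[
\|[w_1,v(w_1)]-[w_2,v(w_2)]\|_{L^2(\omega_p)}\le C\big(\|w_1\|_{H^2}^2+\|w_2\|_{H^2}^2\big)\|w_1-w_2\|_{H^2}.
\]
The linear term $[w,F_0]$ is globally Lipschitz provided $F_0\in H^{3+\delta}(\omega_p)$. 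Since $\|\Delta w\|_{L^2}$ controls the $H^2_0$ norm, the $\mathbf X$-norm controls $\|w\|_{H^2}$, so $\mathcal F$ is locally Lipschitz on $\mathbf X$. Pazy's local theory (Thm 6.1.4) then yields a unique local mild solution on $[0,T_{\max})$. Moreover, $T_{\max}<\infty$ forces $\|\mathbf y(t)\|_{\mathbf X}\to\infty$.

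\textbf{Step 2: energy identity.} I will rule out blow-up by an energy identity. The identity is first proved for strong solutions with data in $\mathcal D(\mathbf A)$, where local strong solvability follows because $\mathcal F$ is also Lipschitz on bounded sets of $\mathcal D(\mathbf A)$ in the graph norm. I repeat the computation (E.1)--(E.5) leading to \eqref{b9}, which is justified on $\mathcal D(\mathbf A)$ via the dissipativity calculation. This time the extra plate term $-([w,v(w)+F_0],w_t)$ appears. By the symmetry of the von K\'arm\'an bracket,
\[
([w,v(w)],w_t)=-\tfrac14\tfrac{d}{dt}\|\Delta v(w)\|^2 \quad\text{and}\quad ([w,F_0],w_t)=\tfrac12\tfrac{d}{dt}([w,w],F_0).
\]
This produces the modified energy
\[
\mathcal E_{vK}=\mathcal E+\tfrac12\|\Delta v(w)\|^2-([w,w],F_0),
\]
which is nonincreasing. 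The identity is then extended to mild solutions by density of $\mathcal D(\mathbf A)$ and continuous dependence on the local interval.

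\textbf{Step 3: coercivity, the main obstacle.} The difficulty is the indefinite term $([w,w],F_0)$. I will handle it with the standard bound
\[
\|w\|^2_{H^{2-\epsilon}}\le \epsilon'\big(\|\Delta w\|^2+\|\Delta v(w)\|^2\big)+C_{\epsilon'}.
\]
This follows from compactness of $H^2_0\hookrightarrow H^{2-\epsilon}$ together with the fact that $\|\Delta v(w)\|^2=\|w\|^4$-type control dominates the lower-order norm. Combined with $|([w,w],F_0)|\le C\|F_0\|_{H^{2+\delta}}\|w\|^2_{H^{2-\epsilon}}$, this gives
\[
\mathcal E\le C(\mathcal E_{vK}+1)\le C(\mathcal E_{vK}(0)+1).
\]
This is \eqref{globalbound}. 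It precludes blow-up, so $T_{\max}=\infty$, and Lipschitz dependence on bounded sets gives well-posedness on every $[0,T]$.
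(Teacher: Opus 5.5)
Your proposal is correct and follows the paper's route. You use local Lipschitz continuity of $\mathcal F$ on $\mathbf X$ from the sharp $W^{2,\infty}$ Airy-stress estimate, then Pazy's local theory with the blow-up alternative, and finally global control from the energy identity with the von K\'arm\'an potential plus the lower bound \eqref{8.1.1c1}. Your added terms $\tfrac12\|\Delta v(w)\|^2-([w,w],F_0)$ are exactly $2\Pi(w)$, and you derive \eqref{8.1.1c1} from the standard Chueshov--Lasiecka $H^{2-\epsilon}$ bound, where the paper simply cites it.

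One small correction concerns strong solutions for data in $\mathcal D(\mathbf A)$. $\mathcal F$ does not map $\mathcal D(\mathbf A)$ into itself: its plate-velocity entry $\rho_p^{-1}f(w)$ is generally not in $H^2_0(\omega_p)$ and violates (A.13). So strong solvability should come, as in the paper, from the localized form of Pazy's Theorem 6.1.6 applied to the $C^1$ (polynomial) map $\mathcal F:\mathbf X\to\mathbf X$, not from a graph-norm Lipschitz argument.
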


\section{Dissipativity of the Operator \texorpdfstring{$\mathbf{A}:\mathbf{X} \rightarrow \mathbf{X}$}{}}\label{dissss}

Let $\mathbf{y} = [\boldsymbol{\eta}, \boldsymbol{\zeta}, p_b, w, v, p_p, \mathbf{u} ]^T \in \mathcal{D}(\mathbf{A})$ be given. Then, via the description of the operator's action in (\ref{mat}) we have,
\begin{align}
    (\mathbf{A} y, y)_\mathbf{X} &= \left(
    \begin{bmatrix}
    \boldsymbol{\zeta} \\
    \rho_b^{-1} \nabla \cdot \sigma^{E}(\boldsymbol{\eta}) - \rho_b^{-1} \alpha_b \nabla p_b \\
    - c_b^{-1} \alpha_b \nabla \cdot \boldsymbol{\zeta} + c_b^{-1} k_b \Delta p_b \\
    v \\
    \rho_p^{-1} (F_p - D \Delta_{\omega_p}^2 w - \gamma w - \alpha_p \Delta_{\omega_p} \mathcal{K}(p_p)) \\
    c_p^{-1} \alpha_p\tilde{\mathcal{K}}(\Delta v) + c_p^{-1} k_p \partial_{s}^2 p_p \\
    \rho_f^{-1}[\mu_f \Delta \mathbf{u} - \nabla \pi] \end{bmatrix},
    \begin{bmatrix}
    \boldsymbol{\eta} \\ \boldsymbol{\zeta} \\ p_b \\ w \\ v \\ p_p \\ \mathbf{u}
    \end{bmatrix}
    \right)_{X}  \label{d1}
    \end{align}
 Proceeding as we did in the computations for the energy identity above, and
taking into account the definition of the energy inner product in (\ref{energy}), this computation becomes

\begin{eqnarray}
\left( \mathbf{Ay},\mathbf{y}\right) _{\mathbf{X}} &=&-k_{p}\left\Vert \nabla
p_{b}\right\Vert _{L^{2}(\Omega _{b})}^{2}-2\mu _{f}\left\Vert \mathbf{D}(%
\mathbf{u)}\right\Vert _{L^{2}(\Omega _{f})}^{2}-k_{p}\left\Vert \partial
_{s}p_{p}\right\Vert _{L^{2}(\omega _{p})}^{2}  \nonumber \\
&&+I_{bdry}+\left\langle F_p,v\right\rangle _{\omega _{p}},  \label{d2}
\end{eqnarray}
  
where $I_{bdry}$ term is,
\begin{equation}
I_{bdry} = - \int_{\Gamma_I} \sigma_b \mathbf{e}_3 \cdot \boldsymbol{\zeta} d\Gamma_I -\left\langle \nabla p_{b}\cdot \mathbf{e}_{3},p_{b}\right\rangle _{\omega
_{p}}+k_{p}\left\langle \partial _{s}p_{p},p_{p}(\mathbf{n}%
_{p})_{s}\right\rangle _{\partial \Omega _{p}} + \int_{\Gamma_I} \sigma_f \mathbf{e}_3 \cdot \mathbf{u} d\Gamma_I.
\label{d3}
\end{equation}
Again, $\mathbf{n}_{p}=[n_{1},n_{2},n_{s}]$ is
the exterior unit normal vector on $\Omega _{p}$. Also, the difference of fluxes term $F_p$ is as given in (\ref{flux2}). (In this work, we are also using the fact that $\mathcal{K}\in \mathcal{L}%
\left( L^{2}(\Omega _{p}),L^{2}(\omega _{p})\right) $ and $\mathcal{\tilde{K}%
}\in \mathcal{L}\left( L^{2}(\omega _{p}),L^{2}(\Omega _{p})\right) $ are
adjoint to each other.)

Using the domain boundary interface conditions (A.14), (A.17) and (A.18), we have that
\begin{align*}
    \int_{\Gamma_I} \sigma_f \mathbf{e}_3 \cdot \mathbf{u} d\Gamma_I - \int_{\Gamma_I} \sigma_f \mathbf{e}_3 \cdot \mathbf{e}_3 v d\Gamma_I &= \int_{\Gamma_I} \sigma_f \mathbf{e}_3 \cdot \mathbf{e}_i (\mathbf{u} \cdot \mathbf{e}_i) d\Gamma_I - \int_{\Gamma_I} \sigma_f \mathbf{e}_3 \cdot \mathbf{e}_3 v d\Gamma_I\\
    &= \int_{\Gamma_I} \sigma_f \mathbf{e}_3 \cdot \boldsymbol{\tau} (\mathbf{u} \cdot \boldsymbol{\tau})d\Gamma_I + \int_{\Gamma_I} \sigma_f \mathbf{e}_3 \cdot \mathbf{e}_3 [(\mathbf{u} \cdot \mathbf{e}_3) - v] d\Gamma_I\\
    &= -\beta \Vert \mathbf{u} \cdot \boldsymbol{\tau} \Vert^2_{L^2(\omega_p)} + \int_{\omega_p^-} k_p (\partial_s p_p) p_p d\omega_p^-.
\end{align*}

This relation in turns gives%
\begin{eqnarray}
&&I_{bdry}+ \left\langle F_p,v\right\rangle _{\omega _{p}} \nonumber \\
&=&- \cancel{\int_{\Gamma _{i}}\sigma _{b}\mathbf{e}_{3}\cdot \zeta d\Gamma
_{I}} - \bcancel{\left\langle \nabla p_{b}\cdot \mathbf{e}_{3},p_{b}\right\rangle
_{\omega _{p}}} + \bcancel{k_{p}\left\langle \partial _{s}p_{p},p_{b}\right\rangle
_{\omega _{p}^{-}}} \nonumber \\
&&-\beta \left\Vert \mathbf{u}\cdot \mathbf{\tau }\right\Vert _{\Gamma
_{I}}^{2}+\cancel{\left\langle \sigma _{b}\mathbf{e}_{3}\cdot \mathbf{e}%
_{3},v\right\rangle _{\omega _{p}}} \nonumber \\
&=& -\beta \left\Vert \mathbf{u}\cdot \mathbf{\tau }\right\Vert _{\Gamma
_{I}}^{2}, \label{d5}
\end{eqnarray}%
 where in the last step, we used the domain boundary interface conditions (A.13), (A.15) and (A.16).

Finally, applying the relation (\ref{d5}) to the RHS of (\ref{d3}), we have the relation, $\forall \mathbf{y} \in \mathcal{D}(\mathbf{A})$,
$$(\mathbf{A}\mathbf{y},\mathbf{y})_X = -k_b \Vert \nabla p_b \Vert^2_{L^2(\Omega_b)} - 2\mu_f \Vert \mathbf{D}(\mathbf{u})\Vert^2_{L^2(\Omega_f)} - k_p \Vert \partial_s p_p \Vert^2_{L^2(\Omega_p)} -\beta \Vert \mathbf{u} \cdot \boldsymbol{\tau} \Vert^2_{L^2(\omega_p)} \leq 0.$$
That is, $\mathbf{A}$ is dissipative on $\mathbf{X}$.

\section{Maximality}\label{maxx}
\subsection{Derivation of Associated Bilinear Forms}
Given $\mathbf{F} \in \mathbf{X}$, and given parameter $\lambda > 0$, we wish to find  $\mathbf{y} = [\boldsymbol{\eta}, \boldsymbol{\zeta}, p_b, w, v, p_p, \mathbf{u}]^T \in \mathcal{D}(\mathbf{A})$ which solves the equation
\begin{equation}
(\lambda I - \mathbf{A}) \mathbf{y} = \mathbf{F}.  \label{resolve}
\end{equation}
Choosing $\lambda = 1$ in (\ref{resolve}), this resolvent relation becomes:
\begin{equation}
\label{full-form}
\begin{bmatrix}
\boldsymbol{\eta} - \boldsymbol{\zeta} \\
\boldsymbol{\zeta} - \rho_b^{-1} \nabla \cdot \sigma^{E}(\boldsymbol{\eta}) + \alpha_b \rho_b^{-1} \nabla p_b \\
p_b + c_b^{-1} \alpha_b \nabla \cdot \boldsymbol{\zeta} - c_b^{-1} k_b \Delta p_b \\
w - v \\
v + \rho_p^{-1} [D \Delta_{\omega_p}^2 w + \gamma w + \alpha_p \Delta_{\omega_p} \mathcal{K}(p_p) - F_p] \\
p_p - \alpha_p c_p^{-1} \tilde{\mathcal{K}}(\Delta_{\omega_p} v) - c_p^{-1} k_p \partial_s^2 p_p \\
\mathbf{u} - \rho_f^{-1} \mu_f \Delta \mathbf{u} + \rho_f^{-1} \nabla \pi \\
\end{bmatrix} =
\begin{bmatrix}
\mathbf{f}_1 \\ \mathbf{f}_2 \\ f_3 \\ f_4 \\ f_5 \\ f_6 \\ \mathbf{f}_7
\end{bmatrix} \in \mathbf{X}.
\end{equation}
Via the substitutions $\boldsymbol{\zeta} = \boldsymbol{\eta} - \mathbf{f}_1$ and $v = w - f_4$, we then arrive at, 
\begin{equation} \label{eg:reduced-form}
\begin{bmatrix}
\boldsymbol{\eta} - \rho_b^{-1} \nabla \cdot \sigma^{E}(\boldsymbol{\eta}) + \alpha_b \rho_b^{-1} \nabla p_b \\
p_b + c_b^{-1} \alpha_b \nabla \cdot \boldsymbol{\eta} - c_b^{-1} k_b \Delta p_b \\
w + \rho_p^{-1} [D \Delta_{\omega_p}^2 w + \gamma w + \alpha_p \Delta_{\omega_p} \mathcal{K}(p_p) - F_p] \\
p_p - \alpha_p c_p^{-1} \tilde{\mathcal{K}}(\Delta_{\omega_p} w) - c_p^{-1} k_p \partial_s^2 p_p \\
\mathbf{u} - \rho_f^{-1} \mu_f \Delta \mathbf{u} + \rho_f^{-1} \nabla \pi \\
\end{bmatrix} =
\begin{bmatrix}
\mathbf{f}_2 + \mathbf{f}_1 \\
f_3 + c_b^{-1} \alpha_b \nabla \cdot \mathbf{f}_1 \\
f_5 + f_4 \\
f_6 - \alpha_p c_p^{-1} \tilde{\mathcal{K}}(\Delta_{\omega_p} f_4) \\
\mathbf{f}_7
\end{bmatrix}.
\end{equation}
By way of dealing with this relation, we define the auxiliary space
\begin{multline*}
\Sigma = \Bigl\{ \boldsymbol{\psi} = [\boldsymbol{k}, q_b, \xi, q_p, \boldsymbol{\delta}]^T \in \mathbf{H}^1_{\#,*}(\Omega_b) \times H^1_{\#,*}(\Omega_b) \times H^2_{0}(\omega_p) \times H^{0,0,1}(\Omega_p) \times \mathbf{H}^1_{\#, *}(\Omega_f): \\ 
\boldsymbol{k} \vert_{\Gamma_I} = \xi \mathbf{e}_3, \ q_b \vert_{\Gamma_I} = q_p \vert_{\omega_p^+} \Bigr\},
\end{multline*}
(Note that we are not imposing the divergence free constraint on the fluid component of this space.) We equip $\Sigma$ with the following inner product: for given $\boldsymbol{\phi} = [\boldsymbol{\eta}, p_b, w, p_p, \mathbf{u}]^T \in \Sigma$ and $\boldsymbol{\psi} = [\mathbf{k}, q_b, \xi, q_p, \boldsymbol{\delta}]^T \in \Sigma$, 
$$(\boldsymbol{\phi}, \boldsymbol{\psi})_{\Sigma} = a_E(\boldsymbol{\eta}, \mathbf{k}) + (\nabla p_b, \nabla q_b)_{L^2(\Omega_b)} + D( \Delta w, \Delta \xi)_{L^2(\omega_p)} + (p_p, q_p)_{H^{0,0,1}(\Omega_p)} + \mu_f (\nabla \mathbf{u}, \nabla \boldsymbol{\delta})_{L^2(\Omega_f)}.$$
We can now define the weak form of our reduced problem. Supposing that $\mathbf{y} = [\boldsymbol{\eta}, \boldsymbol{\zeta}, p_b, w, v, p_p, \mathbf{u}]^T \in \mathcal{D}(\mathbf{A})$ is a soluion of (\ref{resolve}), we take the (initially formal) $\Sigma$-inner product of our reduced system (\ref{eg:reduced-form}), with respect to $\mathbf{\psi }=[\mathbf{k},q_{b},\xi ,q_{p},\mathbf{\delta }]\in \Sigma $
:
\begin{equation}
\left(
\begin{bmatrix}
\boldsymbol{\eta} - \rho_b^{-1} \nabla \cdot \sigma^E(\boldsymbol{\eta}) + \alpha_b \rho_b^{-1} \nabla p_b \\
p_b + c_b^{-1} \alpha_b \nabla \cdot \boldsymbol{\eta} - c_b^{-1} k_b \Delta p_b \\
w + \rho_p^{-1} [D \Delta_{\omega_p}^2 w + \gamma w + \alpha_p \Delta_{\omega_p} \mathcal{K}(p_p) - F_p] \\
p_p - \alpha_p c_p^{-1} \tilde{\mathcal{K}}(\Delta_{\omega_p} w) - c_p^{-1} k_p \partial_s^2 p_p \\
\mathbf{u} - \rho_f^{-1} \mu_f \Delta \mathbf{u} + \rho_f^{-1} \nabla \pi \\
\end{bmatrix}
,
\boldsymbol{\psi}
\right)_{\Sigma}
=
\left(
\begin{bmatrix}
\mathbf{f}_2 + \mathbf{f}_1 \\
f_3 + c_b^{-1} \alpha_b \nabla \cdot \mathbf{f}_1 \\
f_5 + f_4 \\
f_6 - \alpha_p c_p^{-1} \tilde{\mathcal{K}}(\Delta_{\omega_p} f_4) \\
\mathbf{f}_7
\end{bmatrix}
,
\boldsymbol{\psi}
\right)_{\Sigma}. \label{reduced}
\end{equation}
We proceed to simplify our system of equations using formal calculations, line by line:

\begin{align*}
(R.1) ~~~\rho_b (\boldsymbol{\eta}, \mathbf{k})_{L^2(\Omega_b)} &- (\nabla \cdot \sigma^{E}(\boldsymbol{\eta}), \mathbf{k})_{L^2(\Omega_b)} + \alpha_b (\nabla p_b, \mathbf{k})_{L^2(\Omega_b)} \\
&= \rho_b (\boldsymbol{\eta}, \mathbf{k})_{L^2(\Omega_b)} + (\sigma^{E} (\boldsymbol{\eta}), \nabla \mathbf{k})_{L^2(\Omega_b)} - \alpha_b (p_b, \nabla \cdot \mathbf{k})_{L^2(\Omega_b)} - \int_{\partial \Omega_b} \sigma_b \cdot \mathbf{n} \mathbf{k} \\
&= \rho_b (\boldsymbol{\eta}, \mathbf{k})_{L^2(\Omega_b)} + a_{E} (\boldsymbol{\eta}, \mathbf{k}) - \alpha_b (p_b, \nabla \cdot \mathbf{k})_{L^2(\Omega_b)} + \int_{\Gamma_I} \sigma_b \cdot \mathbf{e}_3 \mathbf{k} .
\end{align*}
\begin{align*}
(R.2) ~~~ c_b (p_b, q_b)_{L^2(\Omega_b)} &+ \alpha_b (\nabla \cdot \boldsymbol{\eta}, q_b)_{L^2(\Omega_b)} - k_b (\Delta p_b, q_b)_{L^2(\Omega_b)} \\
&= c_b (p_b, q_b)_{L^2(\Omega_b)} + \alpha_b (\nabla \cdot \boldsymbol{\eta}, q_b)_{L^2(\Omega_b)} + k_b (\nabla p_b, \nabla q_b)_{L^2(\Omega_b)} - k_b \int_{\partial \Omega_b} \nabla p_b \cdot \mathbf{n} q_b \\
&= c_b (p_b, q_b)_{L^2(\Omega_b)} + \alpha_b (\nabla \cdot \boldsymbol{\eta}, q_b)_{L^2(\Omega_b)} + k_b (\nabla p_b, \nabla q_b)_{L^2(\Omega_b)} + k_b \int_{\Gamma_I} \nabla p_b \cdot \mathbf{e}_3 q_b.
\end{align*}
\begin{align*}
(R.3)~~~ \rho_p (w, \xi)_{L^2(\omega_p)} &+ ([D \Delta_{\omega_p}^2 w + \gamma w + \alpha_p \Delta_{\omega_p} \mathcal{K}(p_p) - F_p], \xi)_{L^2(\omega_p)} \\
&= \rho_p (w, \xi)_{L^2(\omega_p)} + (D \Delta_{\omega_p} w + \alpha_p \mathcal{K}(p_p), \Delta_{\omega_p} \xi)_{L^2(\omega_p)} + \gamma (w, \xi)_{L^2(\omega_p)} - (F_p, \xi)_{L^2(\omega_p)}
\end{align*}
\begin{align*}
(R.4) ~~~ c_p (p_p, q_p)_{L^2(\Omega_p)} &- \alpha_p (\tilde{\mathcal{K}}(\Delta_{\omega_p} w), q_p)_{L^2(\Omega_p)} - k_p (\partial_s^2 p_p, q_p)_{L^2(\Omega_p)} \\
&= c_p (p_p, q_p)_{L^2(\Omega_p)} - \alpha_p (\tilde{\mathcal{K}}(\Delta_{\omega_p} w), q_p)_{L^2(\Omega_p)} + k_p (\partial_s p_p, \partial_s q_p)_{L^2(\Omega_p)} \\
   & ~~~~- \int_{\partial \Omega_p} k_p \partial_s p_p q_p (\mathbf{n_{p}})_s d \partial \Omega_p \\
&= c_p (p_p, q_p)_{L^2(\Omega_p)} - \alpha_p (\tilde{\mathcal{K}}(\Delta_{\omega_p} w), q_p)_{L^2(\Omega_p)} + k_p (\partial_s p_p, \partial_s q_p)_{L^2(\Omega_p)} \\
&- \int_{\omega_p^+} k_p [\partial_s p_p] q_p d \omega_p^++ \int_{\omega_p^-} k_p [\partial_s p_p] q_p d \omega_p^-
\end{align*}
\begin{align*}
(R.5) ~~~ \rho_f (\mathbf{u}, \boldsymbol{\delta})_{L^2(\Omega_f)} &- \mu_f (\Delta \mathbf{u}, \boldsymbol{\delta})_{L^2(\Omega_f)} + (\nabla \pi, \boldsymbol{\delta})_{L^2(\Omega_f)} \\
&= \rho_f (\mathbf{u}, \boldsymbol{\delta})_{L^2(\Omega_f)} + \mu_f (\nabla \mathbf{u}, \nabla \boldsymbol{\delta})_{L^2(\Omega_f)} - \int_{\partial \Omega_f} \nabla \mathbf{u} \cdot \mathbf{n} \boldsymbol{\delta} - (\pi, \nabla \cdot \boldsymbol{\delta})_{L^2(\Omega_f)} + \int_{\partial \Omega_f} \pi \boldsymbol{\delta} \cdot \mathbf{n} \\
&= \rho_f (\mathbf{u}, \boldsymbol{\delta})_{L^2(\Omega_f)} + 2 \mu_f (D(\mathbf{u}), D(\boldsymbol{\delta}))_{L^2(\Omega_f)} - (\pi, \nabla \cdot \boldsymbol{\delta})_{L^2(\Omega_f)} - \int_{\partial \Omega_f} \sigma_f \cdot \mathbf{n} \boldsymbol{\delta} \\
&= \rho_f (\mathbf{u}, \boldsymbol{\delta})_{L^2(\Omega_f)} + 2 \mu_f (D(\mathbf{u}), D(\boldsymbol{\delta}))_{L^2(\Omega_f)} - (\pi, \nabla \cdot \boldsymbol{\delta})_{L^2(\Omega_f)} - \int_{\Gamma_I} \sigma_f \cdot \mathbf{e}_3 \boldsymbol{\delta}.
\end{align*}
We moreover expand $F_p$ into its constituent parts,
\begin{equation}
- \int_{\Gamma_I} F_p \xi = - \int_{\Gamma_I} \sigma_b \mathbf{e}_3 \cdot \mathbf{e}_3 \xi + \int_{\Gamma_I} \sigma_f \mathbf{e}_3 \cdot \mathbf{e}_3 \xi. \label{r6}
\end{equation}

\medskip 
Now, concerning the terms above that contain $\sigma_f$, we have that
\begin{align}
\int_{\Gamma_I} \sigma_f \mathbf{e}_3 \cdot \mathbf{e}_3 \xi &- \int_{\Gamma_I} \sigma_f \cdot \mathbf{e}_3 \boldsymbol{\delta} \nonumber \\
&= \int_{\Gamma_I} \sigma_f \mathbf{e}_3 \cdot \mathbf{e}_3 [\xi - (\boldsymbol{\delta} \cdot \mathbf{e}_3)] - \int_{\Gamma_I} \sigma_f \mathbf{e}_3 \cdot \boldsymbol{\tau} (\boldsymbol{\delta} \cdot \boldsymbol{\tau}) \nonumber \\
&= -\int_{\Gamma _{I}}p_{p}(\cdot ,-h/2) [\xi - (\boldsymbol{\delta} \cdot \mathbf{e}_3)] + \beta \int_{\Gamma_I} (\mathbf{u} \cdot \boldsymbol{\tau}) (\boldsymbol{\delta} \cdot \boldsymbol{\tau}),  \label{i7}
\end{align}
after using the boundary interface conditions (A.14) and (A.18). Similarly concerning the terms above which contain $\sigma_b$, we have that
\begin{align}
- \int_{\Gamma_I} \sigma_b \mathbf{e}_3 \cdot \mathbf{e}_3 \xi  &+ \int_{\Gamma_I} \sigma_b \mathbf{e}_3 \cdot \mathbf{k} =0, \label{i8}
\end{align}
after using the interface condition imposed on the test space. 
At this point, we add $(R.1)$, $(R.3)$ and $(R.5)$, taking into account (\ref%
{r6})-(\ref{i8}). We have then,%
\begin{equation}
\begin{array}{l}
(R.1)+(R.3)+(R.5)=\rho _{b}\left( \mathbf{\eta },\mathbf{k}\right)
_{L^{2}(\Omega _{b})}+a_{E}\left( \mathbf{\eta },\mathbf{k}\right)  \\ 
\text{ \ }+\rho _{p}\left( w,\xi \right) _{L^{2}(\Omega _{p})}+D\left(
\Delta_{\omega_p} w,\Delta_{\omega_p} \xi \right) _{L^{2}(\Omega _{p})}+\gamma \left( w,\xi
\right) _{L^{2}(\Omega _{p})} \\ 
\text{ \ }+\rho _{f}\left( \mathbf{u},\mathbf{\delta }\right) _{L^{2}(\Omega
_{f})}+2\mu _{f}\left( \mathbf{D)} (\mathbf{u}),\mathbf{D} (\mathbf{\delta )}\right)
_{L^{2}(\Omega _{f})}-\int_{\Gamma _{I}}p_{p}(\cdot ,-h/2)\left[ \xi -%
\mathbf{\delta \cdot e}_{3}\right] +\beta \int_{\Gamma _{I}}\left( \mathbf{u}%
\cdot \mathbf{\tau }\right) \left( \mathbf{\delta }\cdot \mathbf{\tau }%
\right)  \\ 
\text{ \ \ }-\alpha _{b}\left( p_{b},\nabla \cdot \mathbf{k}\right)
_{L^{2}(\Omega _{b})}+\alpha _{p}\left( \mathcal{K(}p_{p}),\Delta \xi
\right) _{L^{2}(\Omega _{p})}-\left( \pi ,\nabla \cdot \mathbf{\delta }%
\right) _{L^{2}(\Omega _{f})}.%
\end{array}
\label{inter}
\end{equation}

Furthermore:  recalling that $f_4 = w - v$, we then have from domain criterion (A.17) that
\begin{align}
\int_{\omega_p^-} k_p [\partial_s p_p] q_p &= \int_{\Gamma_I} [v - (\mathbf{u} \cdot \mathbf{e}_3)] q_p(\cdot, -h/2) + \int_{\Gamma_I} f_4 q_p(\cdot, -h/2) - \int_{\Gamma_I} f_4 q_p(\cdot, -h/2) \nonumber \\
&= \int_{\Gamma_I} [w - (\mathbf{u} \cdot \mathbf{e}_3)] q_p(\cdot, -h/2) - \int_{\Gamma_I} f_4 q_p(\cdot, -h/2). \label{i9} 
\end{align}
By (A.18), we also have,

(The $\int_{\omega_p} f_4 q_p(\cdot, -h)$ term will be added to the forcing term of the pending inf-sup formulation.)
In addition, by (A.15) and (A.16),
\begin{equation}
\int_{\Gamma_I} k_b \nabla p_b \cdot \mathbf{e}_3 q_b - \int_{\omega_p^+} k_p [\partial_s p_p] q_p = 0.
\label{i10}
\end{equation}

Thus, from (\ref{reduced}) and (\ref{r6})-(\ref{i10}): if $\mathbf{y} = [\boldsymbol{\eta}, \boldsymbol{\zeta}, p_b, w, v, p_p, \mathbf{u}]^T \in \mathcal{D}(\mathbf{A})$ solves the resolvent equation (\ref{resolve}), then for all $\mathbf{\psi }=[\mathbf{k},q_{b},\xi ,q_{p},\boldsymbol{\delta }]\in \Sigma $,

\begin{equation}
\begin{array}{l}
\left( \mathbf{F},\mathbf{\psi }\right) _{\mathbf{X}}=\left( \left( I-%
\mathbf{A}\right) \mathbf{y},\mathbf{\psi }\right) _{\mathbf{X}%
}=\sum_{i=1}^{5}(R.i) \\ 
=\rho _{b}\left( \mathbf{\eta },\mathbf{k}\right) _{L^{2}(\Omega
_{b})}+a_{E}\left( \mathbf{\eta },\mathbf{k}\right) +c_{b}\left(
p_{b},q_{b}\right) _{L^{2}(\Omega _{b})}+k_{b}\left( \nabla p_{b},\nabla
q_{b}\right) _{L^{2}(\Omega _{b})} \\ 
\text{ \ }+\rho _{p}\left( w,\xi \right) _{L^{2}(\Omega _{p})}+D\left(
\Delta_{\omega_p} w,\Delta_{\omega_p} \xi \right) _{L^{2}(\Omega _{p})}+\gamma \left( w,\xi
\right) _{L^{2}(\Omega _{p})} \\ 
\text{ \ \ }+c_{p}\left( p_{p},q_{p}\right) _{L^{2}(\Omega
_{p})}+k_{p}\left( \partial _{s}p_{p},\partial _{s}q_{p}\right)
_{L^{2}(\Omega _{p})} \\ 
\text{ \ }+\rho _{f}\left( \mathbf{u},\boldsymbol{\delta }\right) _{L^{2}(\Omega
_{f})}+2\mu _{f}\left( \mathbf{D}(\mathbf{u}),\mathbf{D}(\boldsymbol{\delta }%
)\right) _{L^{2}(\Omega _{f})}-\int_{\Gamma _{I}}p_{p}(\cdot ,-h/2)\left[
\xi -\boldsymbol{\delta \cdot e}_{3}\right] +\int_{\Gamma _{I}}\left[ w-\mathbf{%
u\cdot e}_{3}\right] q_{p}(\cdot ,-h/2) \\ 
\text{ \ \ }+\beta \int_{\Gamma _{I}}\left( \mathbf{u}\cdot \mathbf{\tau }%
\right) \left(\boldsymbol{\delta }\cdot \mathbf{\tau }\right) -\alpha
_{b}\left( p_{b},\nabla \cdot \mathbf{k}\right) _{L^{2}(\Omega _{b})}+\alpha
_{b}\left( \nabla \cdot \boldsymbol{\eta },q_{b},\right) _{L^{2}(\Omega
_{b})}+\alpha _{p}\left( \mathcal{K}\left( p_{p}\right) ,\Delta _{\omega
_{p}}\xi \right) _{_{L^{2}(\omega _{p})}} \\ 
\text{ \ \ }-\alpha _{p}\left( \widetilde{\mathcal{K}}\left( \Delta _{\omega
_{p}}w\right) ,q_{p}\right) _{_{L^{2}(\Omega _{p})}}-\left( \pi ,\nabla
\cdot \boldsymbol{\delta }\right) _{L^{2}(\Omega _{f})}-\int_{\Gamma
_{I}}f_{4}q_{p}(\cdot ,-h/2) .%
\end{array}
\label{var}
\end{equation}

The characterization of the resolvent equation solution leads us to the following variational problem: Find a pair [$\boldsymbol{\phi}, \pi] \in \Sigma \times L^2(\Omega_f)$ such that 
\begin{alignat}{2}
a(\boldsymbol{\phi}, \boldsymbol{\psi}) + b(\boldsymbol{\psi}, \pi) &= \mathcal{f}(\boldsymbol{\psi}) \quad &&\forall \boldsymbol{\psi} \in \Sigma \label{rel} \\
b(\boldsymbol{\psi}, \chi) &= 0 \quad &&\forall \chi \in L^2(\Omega_f), \label{con}
\end{alignat}

Here (taken from (\ref{var}), the bilinear forms and data are as follows: for given $\boldsymbol{\phi} = [\boldsymbol{\eta}, p_b, w, p_p, \mathbf{u}]^T \in \Sigma$ and $\boldsymbol{\psi} = [\mathbf{k}, q_b, \xi, q_p, \boldsymbol{\delta}]^T \in \Sigma$, and $\chi \in L^2(\Omega_f)$
\begin{equation} \label{eq:bilinear_a}
\begin{aligned}
a(\boldsymbol{\phi}, \boldsymbol{\psi}) \equiv & ~\rho_b (\boldsymbol{\eta}, \mathbf{k})_{L^2(\Omega_b)} + a_{E}(\boldsymbol{\eta}, \mathbf{k}) + c_b (p_b, q_b)_{L^2(\Omega_b)} + k_b (\nabla p_b, \nabla q_b)_{L^2(\Omega_b)} \\
&+ \rho_p (w, \xi)_{L^2(\omega_p)} + D(\Delta_{\omega_p} w, \Delta_{\omega_p} \xi)_{L^2(\omega_p)} + \gamma (w, \xi)_{L^2(\omega_p)} + c_p (p_p, q_p)_{L^2(\Omega_p)} \\
&+ k_p (\partial_s p_p, \partial_s q_p)_{L^2(\Omega_p)} + \rho_f (\mathbf{u}, \boldsymbol{\delta})_{L^2(\Omega_f)} + 2 \mu_f (\mathbf{D}(\mathbf{u}), \mathbf{D}(\boldsymbol{\delta}))_{L^2(\Omega_f)} \\
&+ \beta \int_{\Gamma_I} (\mathbf{u} \cdot \boldsymbol{\tau}) (\boldsymbol{\delta} \cdot \boldsymbol{\tau}) + \alpha_p (\mathcal{K}(p_p), \Delta_{\omega_p} \xi)_{L^2(\omega_p)} - \alpha_p (\tilde{\mathcal{K}}(\Delta_{\omega_p} w), q_p)_{L^2(\omega_p)} \\
&+ \alpha_b (\nabla \cdot \boldsymbol{\eta}, q_b)_{L^2(\Omega_b)} - \alpha_b (p_b, \nabla \cdot \mathbf{k})_{L^2(\Omega_b)} \\
&+ \int_{\Gamma_I} [w - (\mathbf{u} \cdot \mathbf{e}_3)] q_p(\cdot,-h/2) - \int_{\Gamma_I} p_p(\cdot,-h/2) [\xi - (\boldsymbol{\delta} \cdot \mathbf{e}_3)];
\end{aligned}
\end{equation}
\begin{equation} \label{eq:bilinear_b}
\hspace*{-10cm} b(\boldsymbol{\psi}, \chi) \equiv - (\chi, \nabla \cdot \boldsymbol{\delta})_{L^2(\Omega_f)};
\end{equation}
\begin{equation}
\hspace*{-3.2cm}
\begin{aligned}
\mathcal{f}(\boldsymbol{\psi}) \equiv & ~ \rho_b (\mathbf{f}_2 + \mathbf{f}_1, \mathbf{k})_{L^2(\Omega_b)} + (c_b f_3 + \alpha_b \nabla \cdot \mathbf{f}_1, q_b)_{L^2(\Omega_b)} \\
&+ \rho_p (f_5 + f_4, \xi)_{L^2(\omega_p)} + (c_p f_6 - \alpha_p \tilde{\mathcal{K}}(\Delta_{\omega_p} f_4), q_p)_{L^2(\Omega_p)} \\
&+ \int_{\Gamma_I} f_4 q_p(\cdot, -h/2) + \rho_f (\mathbf{f}_7, \boldsymbol{\delta})_{L^2(\Omega_f)}.
\end{aligned}
\end{equation}

The mixed variational formulation (\ref{rel})-(\ref{con}) constitutes the form of the resolvent relation (\ref{resolve}). Accordingly, we will put ourselves in position to invoke use the Babu\u{s}ka-Brezzi Theorem to show that there exists a unique solution to this system of equations (Appendix \ref{sec:babuksa-brezzi}). \\
Note that if the dynamics operator is maximal for some $\lambda > 0$, then it is maximal $\forall \lambda > 0$ \cite{pazy}. For simplicity we have chosen $\lambda = 1$, since $\lambda \neq 1$ only results in different positive coefficients on the data.

\subsection{Coercivity and Continuity}
The bilinear forms $a(\cdot ,\cdot ):\Sigma \times \Sigma $ and $b(\cdot ,\cdot ):\Sigma \times
L^{2}(\Omega _{f})$, as given in (\ref{eq:bilinear_a}) and (\ref{eq:bilinear_b}), are each readily seen to be continuous. (In particular, the Sobolev Trace Theorem and the Inequalitites of Korn and Poincar\'e are used to estimate the boundary integral which partially make up $a(\cdot,\cdot)$.) Moreover, $a(\cdot ,\cdot )$ is $\Sigma$-elliptic:
given $\boldsymbol{\phi} = [\boldsymbol{\eta}, p_b, w, p_p, \mathbf{u}]^T \in \Sigma$, we have that
\begin{align*}
a(\boldsymbol{\phi}, \boldsymbol{\phi}) &= \left( \rho_b \Vert \boldsymbol{\eta} \Vert^2_{L^2(\Omega_b)} + \Vert \boldsymbol{\eta} \Vert_E^2 \right) + \left( c_p \Vert p_b \Vert^2_{L^2(\Omega_b)} + k_b \Vert \nabla p_b \Vert^2_{L^2(\Omega_b)} \right) \\
&+ \left( \rho_p \Vert w \Vert^2_{L^2(\omega_p)} + D \Vert \Delta_{\omega_p} w \Vert^2_{L^2(\omega_p)} + \gamma \Vert w \Vert^2_{L^2(\omega_p)} \right) + \left( c_p \Vert p_p \Vert^2_{L^2(\Omega_p)} + k_p \Vert \partial_s p_p \Vert^2_{L^2(\Omega_p)} \right) \\
&+ \left( \rho_f \Vert \mathbf{u} \Vert^2_{L^2(\Omega_f)} + 2 \mu_f \Vert D(\mathbf{u}) \Vert^2_{L^2(\Omega_f)} + \beta \int_{\Gamma_I} \vert \mathbf{u} \cdot \boldsymbol{\tau} \vert^2 \right) \\
&\geq C(\rho_b, c_p, k_b, \rho_p, D, \gamma, c_p, k_p, \rho_f, \mu_f, \beta) \Vert \boldsymbol{\phi} \Vert^2_{\Sigma}.
\end{align*}

The heart of the matter here is to show the bilinear form $b(\cdot,\cdot): \Sigma \times L^2(\Omega_p)$ fulfills the Babu\v{s}ka-Brezzi condition. That is, there exists a constant $\beta > 0$ such that  
\begin{equation}
\forall \chi \in L^2(\Omega_f) \ \sup_{\boldsymbol{\phi} \in \Sigma \setminus \{ 0 \} } \frac{b(\boldsymbol{\phi}, \chi)}{\Vert \boldsymbol{\phi} \Vert_{\Sigma}} \geq \beta \Vert \chi \Vert_{L^2(\Omega_f)}.
\end{equation}
Our proof follows the same steps as in \cite{AGW}. We have that
\begin{equation}
\sup_{\boldsymbol{\phi} \in \Sigma, \boldsymbol{\phi} \neq 0} \frac{b(\boldsymbol{\phi}, \chi)}{\Vert \boldsymbol{\phi} \Vert_{\Sigma}} \geq \sup_{\substack{\boldsymbol{\phi} \in \Sigma, \mathbf{u} \neq 0 \\ \boldsymbol{\eta} = 0, p_b = 0, \\ p_p = 0, w = 0}} \frac{b(\boldsymbol{\phi}, \chi)}{\Vert \boldsymbol{\phi} \Vert_{\Sigma}} \geq \frac{b(\tilde{\boldsymbol{\phi}}, \chi)}{\Vert \tilde{\boldsymbol{\phi}} \Vert_{\Sigma}} \label{BB1}
\end{equation}
where $\widetilde{\phi }\in \Sigma $ is of the form $\widetilde{\phi }%
=[0,0,0,0,\widetilde{\mathbf{u}}]$, where $\widetilde{\mathbf{u}}\in \mathbf{%
H}_{\#,\ast }^{1}(\Omega _{f})$ is arbitrary. Using the definition of the bilinear form $b(\boldsymbol{\phi}, \pi)$ in (\ref{eq:bilinear_b} and the norm of $\Sigma$, we then have that
\begin{equation}
\frac{b(\tilde{\boldsymbol{\phi}}, \pi)}{\Vert \tilde{\boldsymbol{\phi}} \Vert_{\Sigma}} = -\frac{(\pi, \nabla \cdot \tilde{\mathbf{u}})}{\Vert \mathbf{D}(\tilde{\mathbf{u}}) \Vert_{L^2(\Omega_f)}} \label{BB2}
\end{equation}
With this ratio in mind, we  consider the boundary value problem: \\
Find $\tilde{\mathbf{u}} \in \mathbf{H}_\#^1(\Omega_f)$ such that
\
\begin{eqnarray}
\nabla \cdot \mathbf{\tilde{u}} &=&-\chi \text{ on }\Omega _{f} \nonumber \\
\left. \mathbf{\tilde{u}}\right\vert _{\Gamma _{I}} &=& - \left( \frac{%
\int_{\Omega _{f}}\chi d\Omega _{f}}{meas(\Gamma _{I})}\right) \mu(x) \mathbf{e}%
_{3}\text{ \ on }\Gamma _{I} \nonumber \\
\left. \mathbf{\tilde{u}}\right\vert _{\Gamma _{f}} &=&\mathbf{0}\text{ \ on 
}\Gamma _{f}.  \ref{div}
\end{eqnarray}%
Here, $\mu $ is a cut-off function such that%
\[
\mu (x)=\left\{ 
\begin{array}{l}
1\text{ \ on }\Gamma _{I} \\ 
\text{a nonegative, periodic function, \ on each lateral face }\Gamma \text{ of 
}\Omega _{f},%
\end{array}%
\right. 
\]%
and with supp$(\mu )\cap \Gamma _{f}=\varnothing $. This boundary value problem has a solution \cite{galdi} -- as the compatibility condition for solvability is met --  with the continuous dependence estimate, 
\begin{equation}
\Vert \tilde{\mathbf{u}} \Vert_{H^1(\Omega)} \leq c_1 \Vert \chi \Vert_{L^2(\Omega_f)}. \label{div}
\end{equation}
Applying this solution $\tilde{\mathbf{u}}$ to (\ref{BB2}), we then have 
\begin{equation}
 \frac{b(\tilde{\boldsymbol{\phi}}, \chi)}{\Vert \tilde{\boldsymbol{\phi}} \Vert_{\Sigma}} = -\frac{(\chi, \nabla \cdot \tilde{\mathbf{u}})}{\Vert \mathbf{D}(\tilde{\mathbf{u}}) \Vert_{L^2(\Omega_f)}} = \frac{\Vert \chi \Vert_{L^2(\Omega_f)}^2}{\Vert \mathbf{D}(\tilde{\mathbf{u}}) \Vert_{L^2(\Omega_f)}}.
\end{equation}
From the norm equivalence of $\Vert \mathbf{D}(.) \Vert$ and the $\mathbf{H}^1$ norm, and the continuous dependence estimate (\ref{div}), we then have that
$$\frac{\Vert \chi \Vert_{L^2(\Omega_f)}^2}{\Vert \mathbf{D}(\tilde{\mathbf{u}}) \Vert_{L^2(\Omega_f)}} \geq c_2 \frac{\Vert \chi \Vert^2_{L^2(\Omega_f)}}{\Vert \chi \Vert_{L^2(\Omega_f)}} = c_2 \Vert \chi \Vert_{L^2(\Omega_f)}.$$
This estimate and (\ref{BB1}) yield now, for given $\chi \in L^2(\Omega_f)$
$$\sup_{\boldsymbol{\phi} \in \Sigma, \boldsymbol{\phi} \neq 0} \frac{b(\boldsymbol{\phi}, \chi)}{\Vert \boldsymbol{\phi} \Vert_{\Sigma}} \geq c_2 \Vert \chi \Vert_{L^2(\Omega_f)}.$$
for arbitrary $\pi \in L^2(\Omega_f)$. 
Given this inf-sup estimate, relative to continuous $b(\cdot,\cdot)$, and as $a(\cdot.\cdot)$ is continuous and $\Sigma$-elliptic, the Babu\u{s}ka-Brezzi Theorem provides for a pair $\phi \in \Sigma$ and $\pi \in L^2$ which uniquely solve the system of equations (\ref{rel})-(\ref{con}).

\subsection{Recovery of \texorpdfstring{$\mathcal D(\mathbf A)$}{}}

We wish now to show that our unique solution to the Babu\v{s}ka-Brezzi system (\ref{rel})-(\ref{con}), $(\boldsymbol{\phi}, \pi) \in \Sigma \times L^2(\Omega_f)$, gives rise to an element in $\mathcal{D}(\mathbf{A})$. (Again, $\boldsymbol{\phi} = [\boldsymbol{\eta}, p_b, w, p_p, \mathbf{u}]^T $.) 

\medskip

\textbf{{\large Part 1.}} We start by extracting some preliminary relations from the variational solution to (\ref{rel})-(\ref{con}). Let test function $\psi = [\mathbf{k}, q_b, \xi, q_p, \boldsymbol{\delta}]^T \in \Sigma$, where $\mathbf{k} \in [\mathcal{D}(\Omega _{b})]^{3}$, $q_b =0, q_p  =0 ,\xi = 0$, and $\boldsymbol{\delta} = \mathbf{0}$. For this particular test function $\boldsymbol{\psi}$ we then have from the variational relation (\ref{rel}) that
$$\rho_b (\boldsymbol{\eta}, \mathbf{k})_{L^2(\Omega_b)} + a_E(\boldsymbol{\eta}, \mathbf{k}) - \alpha_b (p_b, \nabla \cdot \mathbf{k})_{L^2(\Omega_b)} = \rho_b (\mathbf{f}_2 + \mathbf{f}_1, \mathbf{k}).$$
Given the regularity in $\Sigma$, we have from (\ref{rel}) that
\begin{equation}
    \rho_b \boldsymbol{\eta} - \nabla \cdot \sigma^E(\boldsymbol{\eta}) + \alpha_b \nabla p_b = \rho_b (\mathbf{f}_2 + \mathbf{f}_1),  \label{uno}
    \end{equation}
in the sense of distributions, and by the density of $\mathcal{D}(\Omega_{b})$, in $L^2$-sense. Thus,
\begin{equation}
\nabla \cdot \sigma^E(\boldsymbol{\eta}) =\rho_b \boldsymbol{\eta}   + \alpha_b \nabla p_b - \rho_b (\mathbf{f}_2 + \mathbf{f}_1)  \in \mathbf{L}^2(\Omega_b).  \label{dos}
\end{equation}
This regularity subsequently yields via an energy method that 
\begin{equation}
    \sigma^E(\boldsymbol{\eta})\cdot \mathbf{n} \in \mathbf{H}^{-1/2}(\partial \Omega_b). \label{tres}
\end{equation}
This same train of logic can be invoked for each of the solution variables, by varying the choices of test function $\boldsymbol{\psi}$, so as to derive the following relations:
\begin{align}
k_b \Delta p_b  = & c_b p_b   + \alpha_b \nabla \cdot \boldsymbol{\eta} - c_b f_3 - \alpha_b \nabla \cdot \mathbf{f}_1  \in  L^2(\Omega_b)  \label{l.1} \\
k_b \nabla p_b \cdot \mathbf{n_b} & \in H^{-1/2}(\partial \Omega_b) \label{l.2} \\
~~  \nonumber\\
 - \mu_f \Delta \mathbf{u} + \nabla \pi &= - \rho_f \mathbf{u} + \rho_f \mathbf{f}_7  ~~ \in  \mathbf{L}^2(\Omega_f)  \label{l.3} \\
 \nabla \cdot \mathbf{u} & =0 \text{~ in ~} \Omega_f \text{~(from ~ (\ref{con}))} \label{3.5} \\ 
 \sigma_f \cdot \boldsymbol{e}_3 & \in \mathbf{H}^{-1/2}(\Omega_f) \label{l.4} \\
 ~~  \\
 c_p p_p -k_p\partial_s^2p_p -\alpha_p \tilde{\mathcal{K}}(\Delta w) & = c_p f_6 - \alpha_p \tilde{\mathcal{K}}(\Delta_{\omega_p} f_4)  \in  L^2(\Omega_p), \label{l.6}
\end{align}

The situation with the \textquotedblleft thin\textquotedblright\ plate
component of the bilinear form is a bit more subtle, because of the matching
displacements constraint in $\Sigma $. Taking test function $\boldsymbol{\psi} $ in (\ref%
{rel}) as $\boldsymbol{\psi} =[\mathbf{k},0,\xi ,0,0]^{T}\in \Sigma $, then we have  
\begin{eqnarray*}
&&\rho _{b}\left( \mathbf{\eta },\mathbf{k}\right) _{L^{2}(\Omega
_{b})}+a_{E}\left( \mathbf{\eta },\mathbf{k}\right) -\alpha _{b}\left(
p_{b},\nabla \cdot \mathbf{k}\right) _{L^{2}(\omega _{b})}+\rho _{p}\left(
w,\xi \right) _{L^{2}(\omega _{p})}+D\left( \Delta _{\omega _{p}}w,\Delta
_{\omega _{p}}\xi \right) _{L^{2}(\omega _{p})} \\
&&+\gamma \left( w,\xi \right) _{L^{2}(\omega _{p})}+\alpha _{p}\left(
\Delta _{\omega _{p}}\mathcal{K}\left( p_{p}\right) ,\xi \right)
_{_{L^{2}(\omega _{p})}}-\int_{\Gamma _{I}}p_{p}(\cdot ,-h/2)\xi d\Gamma _{I}
\\
&=&\rho _{b}\left( \mathbf{f}_{2}+\mathbf{f}_{1},\mathbf{k}\right)
_{L^{2}(\Omega _{b})}+\rho _{p}\left( f_{5}+f_{4},\xi \right) _{L^{2}(\omega
_{p})}.
\end{eqnarray*}%
Applying (\ref{uno}) and (\ref{dos}) to this expression, and using the fact
that $\left. \mathbf{k}\right\vert _{\Gamma _{I}}=\xi $, we then have the
relation 
\[
\begin{array}{l}
\rho _{p}\left( w,\xi \right) _{L^{2}(\omega _{p})}+D\left( \Delta w,\Delta
\xi \right) _{L^{2}(\omega _{p})}+\gamma \left( w,\xi \right) _{L^{2}(\omega
_{p})}-\left\langle \sigma _{b}(\mathbf{\eta },p_b )\mathbf{e}_{3}\cdot 
\mathbf{e}_{3},\xi \right\rangle _{\Gamma _{I}} \\ 
+\alpha _{p}\left( \Delta _{\omega _{p}}\mathcal{K}\left( p_{p}\right) ,\xi
\right) _{_{L^{2}(\omega _{p})}}-\int_{\Gamma _{I}}p_{p}(\cdot ,-h/2)\xi
d\Gamma _{I} \\ 
\text{ \ \ }=\rho _{p}\left( f_{5}+f_{4},\xi \right) _{L^{2}(\omega _{p})}.%
\end{array}%
\]%
Since, for any function $h\in H_{0}^{2}(\Gamma _{I})$, one can find $\varpi
\in H_{\#,\ast }^{1}(\Omega _{b})\cap H^{2}(\Omega _{b})$ -- see the
characterization of \cite{B-G}, below in (\ref{range})  -- such that $\left.
\varpi \right\vert _{\Gamma _{I}}=h$, we then have 
\begin{equation}
\rho _{p}w+D\Delta ^{2}w+\gamma w+\alpha _{p}\Delta _{\omega _{p}}\mathcal{K}%
\left( p_{p}\right) -\sigma _{b}(\mathbf{\eta },p_b )\mathbf{e}_{3}\cdot 
\mathbf{e}_{3}-p_{p}(\cdot ,-h/2)=\rho _{p}[f_{5}+f_{4}]\text{ \ in }%
L^{2}(\Gamma _{I}).  \label{l.5}
\end{equation}

\medskip
\textbf{{\large Part 2.}}
Here, we establish \textquotedblleft hidden regularity'' for the solution pair $[\boldsymbol{\phi},\pi] \in \Sigma \times L^2(\Omega_f)$ for (\ref{rel})-(\ref{con}), where again, $ \boldsymbol{\phi} =[\boldsymbol{\eta}, p_b, w, q_b, \mathbf{u}]^T$. \newline 
(i) To start,
with%
\begin{equation}
\partial \Omega _{f}=\bigcup\limits_{j=1}^{6}\Gamma _{j}, \label{sides}
\end{equation}
where each $\Gamma _{j}$ denoting one of the six sides of $\Omega _{f}$ then%
\begin{equation}
\left\{ \left. \pi \right\vert _{\Gamma _{j}},\left. \frac{\partial \pi }{%
\partial \mathbf{n}_{f}}\right\vert _{\Gamma j}\right\} \in H^{-\frac{1}{2}%
}(\Gamma _{j})\times H^{-\frac{3}{2}}(\Gamma _{j})\text{, for }j=1,...,6.
\label{T.1}
\end{equation}%
To show this containment, we will use the characterization of the (global)
Sobolev Trace Map, $\gamma =\left( \gamma _{0},\gamma _{1}\right)
:H^{2}(\Omega _{f})\rightarrow H^{1}(\partial \Omega _{f})\times
L^{2}(\partial \Omega _{f})$ (on Lipschitz domain $\Omega _{f}$). That is,%
\[
f\rightarrow \gamma (f)=\left[ \gamma _{0}(f)=\left. f\right\vert _{\partial
\Omega _{f}},\gamma _{1}(f)=\left. \frac{\partial f}{\partial \mathbf{n}_{f}}%
\right\vert _{\partial \Omega _{f}}\right] .
\]%
For this mapping, one has%
\begin{equation}
Range(\gamma )=\left\{ [h_{0},h_{1}]\in H^{1}(\partial \Omega _{f})\times
L^{2}(\partial \Omega _{f}):\nabla _{\partial \Omega _{f}}h_{0}+h_{1}\mathbf{%
n}_{f}\in \mathbf{H}^{\frac{1}{2}}(\partial \Omega _{f})\right\} 
\label{range}
\end{equation}%
(see Theorem 5, p. 702, of \cite{B-G}. Here, $\nabla _{\partial \Omega _{f}}$
is the tangential gradient.)

With this result in hand and $\Gamma \equiv
\Gamma _{j}$, for some index $j$, let 
\begin{equation}
g_{0}\in H_{0}^{\frac{3}{2}+\epsilon }(\Gamma ),g_{1}\in H_{0}^{\frac{1}{2}%
+\epsilon }(\Gamma )  \label{zero}
\end{equation}%
be given; subsequently, let $\widetilde{g_{0}}$ and $\widetilde{g_{1}}$
denote their respective \emph{extensions by zero} onto all of $\partial
\Omega _{f}$. Then by Theorem 3.33, p. 95, of \cite{Mc}, 
\begin{equation}
\widetilde{g_{0}}\in H^{\frac{3}{2}+\epsilon }(\partial \Omega _{f}),%
\widetilde{g_{1}}\in H^{\frac{1}{2}+\epsilon }(\partial \Omega _{f}).
\label{ex}
\end{equation}%
As such, from (\ref{range}), we see that $\left[ \widetilde{g_{0}},%
\widetilde{g_{1}}\right] \in Range(\gamma )$. Consequently, if we invoke the
continuous right inverse $\gamma ^{+}$ of $\gamma $ -- i.e., $\gamma ^{+}\in 
\mathcal{L(}Range(\gamma ),H^{2}(\Omega _{f})$ satisfies $\gamma \gamma
^{+}=I$, then 
\begin{equation}
\left\langle \frac{\partial \pi }{\partial \mathbf{n}_{f}}%
,g_{0}\right\rangle _{\Gamma }-\left\langle \pi ,g_{1}\right\rangle _{\Gamma
}=\left( \pi ,-\Delta \gamma ^{+}(\left[ \widetilde{g_{0}},\widetilde{g_{1}}%
\right] \right) _{\Omega _{f}}+0,  \label{z1}
\end{equation}

after using the fact that $\pi $ is harmonic. (Essentially, we are mimicking
the proof of the analogous Proposition , p.265 of \cite{A-Dv}, which was
undertaken in the case of a smooth fluid geometry.) \ Estimating RHS, in
part using the fact that $\pi $ is the pressure solution component of (\ref%
{rel})-(\ref{con}), we have 
\begin{eqnarray*}
\left\vert \left( \pi ,-\Delta \gamma ^{+}(\left[ \widetilde{g_{0}},%
\widetilde{g_{1}}\right] \right) _{\Omega _{f}}\right\vert  &\leq
&C\left\Vert \pi \right\Vert _{L^{2}(\Omega )}\left\Vert \Delta \gamma ^{+}(%
\left[ \widetilde{g_{0}},\widetilde{g_{1}}\right] \right\Vert _{L^{2}(\Omega
_{f})} \\
&\leq &C\left\Vert \mathbf{F}\right\Vert _{\mathbf{X}}\left\Vert \nabla
_{\partial \Omega _{f}}\widetilde{g_{0}}+\widetilde{g_{1}}\mathbf{n}%
_{f}\right\Vert _{\mathbf{H}^{\frac{1}{2}}(\Omega _{f})},
\end{eqnarray*}%
after using characterization (\ref{range}). Since the zero extensions $%
\widetilde{g_{i}}$ come from the $g_{i}$ in (\ref{zero}), we then have%
\begin{equation}
\left\vert \left( \pi ,-\Delta \gamma ^{+}(\left[ \widetilde{g_{0}},%
\widetilde{g_{1}}\right] \right) _{\Omega _{f}}\right\vert \leq C\left\Vert 
\mathbf{F}\right\Vert _{\mathbf{X}}\left( \left\Vert g_{0}\right\Vert _{H^{%
\frac{3}{2}}(\Gamma )}+\left\Vert g_{1}\right\Vert _{H^{\frac{1}{2}}(\Gamma
)}\right) .  \label{z2}
\end{equation}%

The combination of (\ref{z1}) and (\ref{z2}), and the density of $%
H_{0}^{s+\epsilon }(\Gamma )$ in $H_{0}^{s}\,$, gives now%
\[
\left\vert \left\langle \frac{\partial \pi }{\partial \mathbf{n}_{f}}%
,g_{0}\right\rangle _{\Gamma }-\left\langle \pi ,g_{1}\right\rangle _{\Gamma
}\right\vert \leq C\left\Vert \mathbf{F}\right\Vert _{\mathbf{X}}\left(
\left\Vert g_{0}\right\Vert _{H^{\frac{3}{2}}(\Gamma )}+\left\Vert
g_{1}\right\Vert _{H^{\frac{1}{2}}(\Gamma )}\right) \text{, for all }%
g_{0}\in H_{0}^{\frac{3}{2}}(\Gamma )\text{ and }g_{1}\in H_{0}^{\frac{1}{2}%
}(\Gamma )\text{.}
\]%
This estimate now yields (\ref{T.1}). 
\newline
\medskip 
(ii) In turn, with given cube side $\Gamma $, as in (\ref{sides}), let $%
\mathbf{g}\in \mathbf{H}_{0}^{\frac{1}{2}+\epsilon }(\Gamma )$, and so its
extension by zero $\widetilde{\mathbf{g}}\in \mathbf{H}^{\frac{1}{2}%
+\epsilon }(\partial \Omega _{f})$. Therewith, and accompanied by the right
continuous inverse $\gamma _{0}^{+}\in \mathcal{L}(\mathbf{H}^{\frac{1}{2}%
}(\partial \Omega _{f}),\mathbf{H}^{1}(\Omega _{f}))$ of the zero order
Sobolev Trace map $\gamma _{0}$, we have 
\begin{eqnarray}
\left\langle 2\mu _{f}\mathbf{D}(\mathbf{u})\cdot \mathbf{n}_{f},\mathbf{g}%
\right\rangle _{\Gamma } &=&2\mu _{f}\left( \mathbf{D}(\mathbf{u}),\mathbf{D}%
(\gamma _{0}^{+}[\widetilde{\mathbf{g}}])\right) _{L^{2}(\Omega _{f})}+\mu
_{f}\left( \Delta \mathbf{u},\gamma _{0}^{+}[\widetilde{\mathbf{g}}]\right)
_{L^{2}(\Omega _{f})}  \nonumber \\
&=&\left( \nabla \pi ,\gamma _{0}^{+}[\widetilde{\mathbf{g}}]\right)
_{L^{2}(\Omega _{f})}+\rho _{f}\left( \mathbf{u},\gamma _{0}^{+}[\widetilde{%
\mathbf{g}}]\right) _{L^{2}(\Omega _{f})}-\rho _{f}\left( \mathbf{f}%
_{7},\gamma _{0}^{+}[\widetilde{\mathbf{g}}]\right) _{L^{2}(\Omega _{f})}.
\label{z3}
\end{eqnarray}%
after using the Stokes flow (\ref{l.3}). With respect to the first term on
RHS,%
\begin{eqnarray*}
\left\vert \left( \nabla \pi ,\gamma _{0}^{+}[\widetilde{\mathbf{g}}]\right)
_{L^{2}(\Omega _{f})}\right\vert  &\leq &\left\vert \left\langle \pi ,%
\mathbf{g}\right\rangle _{\Gamma }\right\vert +\left\vert \left( \pi ,\nabla
\cdot \gamma _{0}^{+}[\widetilde{\mathbf{g}}]\right) _{L^{2}(\Omega
_{f})}\right\vert  \\
&\leq &C\left\Vert \mathbf{F}\right\Vert _{\mathbf{X}}\left\Vert \mathbf{g}%
\right\Vert _{\mathbf{H}^{\frac{1}{2}}(\Gamma )},
\end{eqnarray*}%
after using (\ref{T.1}) and the fact that $\pi $ is the pressure solution
component of (\ref{rel})-(\ref{con}), Applying this estimate to (\ref{z3}),
and estimating -- using the fact that $\mathbf{u}$ is the solution component
of (\ref{rel})-(\ref{con}) -- we have 
\[
\left\vert \left\langle 2\mu _{f}\mathbf{D}(\mathbf{u})\cdot \mathbf{n}_{f},%
\mathbf{g}\right\rangle _{\Gamma }\right\vert \leq C\left\Vert \mathbf{F}%
\right\Vert _{\mathbf{X}}\left\Vert \mathbf{g}\right\Vert _{\mathbf{H}^{%
\frac{1}{2}}(\Gamma )}\text{, for given }\mathbf{g}\in \mathbf{H}_{0}^{\frac{%
1}{2}+\epsilon }(\Gamma ).
\]%
The density of  $\mathbf{H}_{0}^{\frac{1}{2}+\epsilon }(\Gamma )$ in $%
\mathbf{H}^{\frac{1}{2}}(\Gamma )$ gives now,%
\begin{equation}
\left\Vert 2\mu _{f}\mathbf{D}(\mathbf{u})\cdot \mathbf{n}_{f}\right\Vert _{%
\mathbf{H}^{-\frac{1}{2}}(\Gamma )}\leq C\left\Vert \mathbf{F}\right\Vert _{%
\mathbf{X}}\text{, \ for given side }\Gamma \text{ of cube }\Omega _{f}.
\label{T.2}
\end{equation}

(iii) Lastly,  for a given face $\Gamma $ of $\Omega _{f}$, take $\pm \mathbf{%
e}_{j}$ to be its normal vector. Then dotting both sides of the equation (%
\ref{l.3}),  with respect to $\mathbf{e}_{j}$ and resstricting its trace to $%
\Gamma $, we then have 
\[
\mu _{f}\Delta \mathbf{u}\cdot \mathbf{e}_{j}=\nabla \pi \cdot \mathbf{e}%
_{j}+\rho _{f}\mathbf{u}\cdot \mathbf{e}_{j}-\rho _{f}\mathbf{f}_{7}\cdot 
\mathbf{e}_{j}\text{ in }\Omega _{f}.
\]%
Subsequently, taking the Dirichlet trace of both sides, restricting to $%
\Gamma $ and then estimating, we then have 
\[
\left\Vert \mu _{f}\Delta \mathbf{u}\cdot \mathbf{e}_{j}\right\Vert _{H^{-%
\frac{3}{2}}(\Gamma )}\leq \left\Vert \nabla \pi \cdot \mathbf{e}%
_{j}\right\Vert _{H^{-\frac{3}{2}}(\Gamma )}+C\left\Vert \mathbf{F}%
\right\Vert _{\mathbf{X}}.
\]%
Invoking the estimate (\ref{T.1}) now provides the trace regularity,%
\begin{equation}
\left\Vert \mu _{f}\Delta \mathbf{u}\cdot \mathbf{n}_{f}\right\Vert _{H^{-%
\frac{3}{2}}(\Gamma )}\leq C\left\Vert \mathbf{F}\right\Vert _{\mathbf{X}},%
\text{ \ for any given side of the cube }\Omega _{f}\text{.}  \label{T.3}
\end{equation}
\medskip 

We also note that since solution component $p_{p}\in H^{0,0,2}(\Omega _{p})$%
, we have from (\ref{trace}) and (\ref{trace2}) the traces 
\begin{equation}
\left\{ \left. p_{p}\right\vert _{\omega _{p}^{+}},\left. \partial
_{s}p_{p}\right\vert _{\omega _{p}^{+}},\left. p_{p}\right\vert _{\omega
_{p}^{-}},\left. \partial _{s}p_{p}\right\vert _{\omega _{p}^{-}}\right\}
\in \lbrack H^{-\frac{1}{2}}(\omega _{p}^{+})]^{2}\times \lbrack H^{-\frac{1%
}{2}}(\omega _{p}^{-})]^{2}.  \label{T.4}
\end{equation}
\medskip
\medskip

\medskip
\textbf{{\large Part 3.}}
(a) We establish that the trace term $\left[ \sigma _{f}(\mathbf{u},\pi )%
\mathbf{n}_{f}\right] _{\partial \Omega _{f}}$ is $\mathbf{H}_{\#}^{-\frac{1%
}{2}}(\partial \Omega _{f})$. Given two \emph{lateral} sides $\Gamma _{a}$ $%
\ $and $\Gamma _{b}$, say, of the cube $\Omega _{f}$, let $\mathbf{g}$ $\in 
\mathbf{H}_{0}^{\frac{1}{2}+\epsilon }(\Gamma _{a})$ be given. Then one can
find $\mathbf{v}\in \mathbf{H}^{1}(\Omega _{f})$ which satisfies the BVP%
\begin{eqnarray}
\nabla \cdot \mathbf{v} &=0&\text{ in }\Omega _{f}  \nonumber \\
\mathbf{v} &\mathbf{=g}&\text{ on }\Gamma _{a}\cup \Gamma _{b}  \nonumber \\
\mathbf{v} &\mathbf{=0}&\text{ on }\partial \Omega _{f}\diagdown (\Gamma
_{a}\cup \Gamma _{b}).\text{ }  \label{3.a}
\end{eqnarray}%
Note the underlying compatibility condition for solvability is met, since%
\[
\int_{\Gamma _{a}}\mathbf{g}\cdot \mathbf{n}_{f}d\Gamma _{a}+\int_{\Gamma
_{b}}\mathbf{g}\cdot \mathbf{n}_{f}d\Gamma _{b}=0\text{.}
\]%
With this variable, and unit normal vector $\pm \mathbf{e}_{j}$ of $\Gamma
_{a}$, $j=1$ or 2, we then have%
\begin{eqnarray*}
&&\left\langle 2\mu _{f}\mathbf{D}(\mathbf{u})(\pm \mathbf{e}_{j})-\pi (\pm 
\mathbf{e}_{j}),\mathbf{g}\right\rangle _{\Gamma _{a}}+2\mu _{f}\left\langle 
\mathbf{D}(\mathbf{u})(\mp \mathbf{e}_{3})-\pi (\mp \mathbf{e}_{j}),\mathbf{g%
}\right\rangle _{\Gamma _{a}} \\
&&2\mu _{f}\left( \mathbf{D}(\mathbf{u}),\mathbf{D}(\mathbf{v})\right)
_{\Omega _{f}}+\mu _{f}\left( \Delta \mathbf{u},\mathbf{v}\right) _{\Omega
_{f}}-\left( \nabla \pi ,\mathbf{v}\right) _{\Omega _{f}} \\
&=&\left( \pi ,\nabla \cdot \mathbf{v}\right) _{\Omega _{f}} \\
&&\text{(after using (\ref{rel}) and (\ref{l.3})} \\
&=&0\text{.}
\end{eqnarray*}%
Thus,%
\begin{equation}
\left[ 2\mu _{f}\mathbf{D}(\mathbf{u})\mathbf{n}_{f}-\pi \mathbf{n}_{f}%
\right] _{\partial \Omega _{f}}\in \mathbf{H}_{\#}^{-\frac{1}{2}}(\partial
\Omega _{f}).  \label{3.b}
\end{equation}

(b) Invoking verbatim the computations in \cite[(4.26), pp. 19--20]{AGW} we have that
\begin{equation}
 2\mu _{f}\mathbf{D}(\mathbf{u})\mathbf{n}_{f} \in \mathbf{H}_{\#}^{-\frac{1}{2}}(\partial
\Omega _{f});  \label{3.c}
\end{equation}
this and (\ref{3.b}) give in turn, 
\begin{equation}
  \pi \in \mathbf{H}_{\#}^{-\frac{1}{2}}(\partial
\Omega _{f});  \label{3.c2}.
\end{equation}
(c) Proceeding in an analogous manner, we have with respect to the Biot PDE component of solution $\boldsymbol{\phi}$ of (\ref{rel})-(\ref{con}),
\begin{equation}
\left. \nabla p_{b}\cdot \mathbf{n}_{f}\right\vert _{\partial \Omega
_{b}}\in H_{\#}^{-\frac{1}{2}}(\partial \Omega _{b});\text{\ }\left. \sigma
_{b}(\mathbf{\eta })\right\vert _{\partial \Omega _{b}}\sigma _{b}(\mathbf{%
\eta })\in H_{\#}^{-\frac{1}{2}}(\partial \Omega _{b}).  \label{3.d}
\end{equation}

\textbf{{\large Part 4.}}
We conclude the domain recovery by showing that the various boundary interface conditions are met. With respect to our variational solution $(\phi, \pi) \in \Sigma \times L^2(\Omega_f)$ of (\ref{rel})-(\ref{con}) and given $\mathbf{\psi }=[\mathbf{k},q_{b},\xi ,q_{p},\boldsymbol{\delta }]\in \Sigma $, we have the following identities, upon applying Green's Theorems to \ref{dos})-(\ref{l.6}) and subsequently invoking the boundary traces now available in (\ref{T.4}), (\ref{3.c}),(\ref{3.c2}) and (\ref{3.d}):
\begin{align}
\left( \rho_f \mathbf{f}_7,\boldsymbol{\delta}\right)_{L^{2}(\Omega _{f})} & = \rho_f\left( \boldsymbol{u} ,\boldsymbol{\delta}\right)_{L^{2}(\Omega _{f})} + (- \nu \Delta \mathbf{u} + \nabla \pi, \boldsymbol{\delta})_{L^2(\Omega_f)} \nonumber \\
          & = \rho_f\left(  \boldsymbol{u} ,\boldsymbol{\delta}\right)_{L^{2}(\Omega _{f})}+ 2\mu_f (\mathbf{D}(\mathbf{u}), \mathbf{D}(\boldsymbol{\delta}))_{L^2(\Omega_f)} - (\pi, \nabla \cdot \boldsymbol{\delta})_{L^2(\Omega_f)} - \langle \sigma_f(\mathbf{u}, \pi) \mathbf{e}_3, \boldsymbol{\delta} \rangle_{\Gamma_I}; \label{fluid}
\end{align}
\begin{align}
     \left( \rho _{b}[\mathbf{f}_{2}+\mathbf{f}_{1}],%
\mathbf{k}\right) _{\mathbf{L}^{2}(\Omega _{b})} &= \rho_{b} \left( \boldsymbol{\eta },%
\mathbf{k}\right) _{\mathbf{L}^{2}(\Omega _{b})} - (\nabla \cdot \sigma_E(\boldsymbol{\eta}) - \alpha_b \nabla p_b, \mathbf{k})_{L^2(\Omega_b)} \nonumber \\
    & = \rho_{b} \left( \boldsymbol{\eta },%
\mathbf{k}\right) _{\mathbf{L}^{2}(\Omega _{b})} + a_E(\boldsymbol{\eta}, \mathbf{k}) - \alpha_b(p_b, \nabla \cdot \boldsymbol{k})_{L^2(\Omega_b)} + \langle \sigma_b(\boldsymbol{\eta}, p_b) \mathbf{e}_3, \mathbf{k} \rangle_{\Gamma_I}; 
\end{align}

\begin{eqnarray}
  \left( c_{b}f_{3}+\alpha
_{b}\nabla \cdot \mathbf{f}_{1},q_{b}\right) _{L^{2}(\Omega _{b})} 
= \left( c_{b}p_{b}+\alpha _{b}\nabla \cdot \boldsymbol{\eta },q_{b}\right) _{L^{2}(\Omega _{b})} -\left(
k_{b}\Delta p_{b},q_{b}\right) _{L^{2}(\Omega _{b})}  \nonumber \\
= \left( c_{b}p_{b}+ \alpha _{b}\nabla \cdot \boldsymbol{\eta },q_{b}\right) _{L^{2}(\Omega _{b})} +\left( k_{b}\nabla p_{b},\nabla q_{b}\right) _{L^{2}(\Omega
_{b})}  +\left\langle k_{b}\partial _{\mathbf{e}_{3}}p_{b},q_{b}\right\rangle
_{\Gamma _{I}};  \label{pb}
\end{eqnarray}

\begin{eqnarray}
&&\left( \rho _{p}[f_{5}+f_{4}],\xi \right) _{L^{2}(\omega _{p})}  \nonumber
\\
&=&\rho _{p}\left( w,\xi \right) _{L^{2}(\omega _{p})}+D\left( \Delta_{\omega_p}
w,\Delta_{\omega_p} \xi \right) _{L^{2}(\omega _{p})}+\gamma \left( w,\xi \right)
_{L^{2}(\omega _{p})}+\left( \alpha _{p}\Delta_{\omega_p} \mathcal{K}(p_{p}),\xi
\right) _{L^{2}(\omega _{p})}  \nonumber \\
&&-\left\langle \sigma _{b}(\mathbf{\eta },p_{b})\mathbf{e}_{3}\cdot \mathbf{%
e}_{3},\xi \right\rangle _{\Gamma _{I}}-\left( p_{p}(\cdot ,-h/2),\xi
\right) _{L^{2}(\omega _{p})}  \label{w}
\end{eqnarray}

\begin{equation}
\begin{array}{l}
\left( c_{p}f_{6}-\alpha _{p}\mathcal{\tilde{K}}(\Delta _{\omega
_{p}}f_{4}),q_{p}\right) _{L^{2}(\Omega _{p})} \\ 
\text{ \ \ \ \ }=c_{p}\left( p_{p},q_{p}\right) _{L^{2}(\Omega _{p})}-\alpha
_{p}\left( \mathcal{\tilde{K}}(\Delta _{\omega _{p}}w),q_{p}\right)
_{L^{2}\Omega _{p})}-\left( k_{p}\partial _{s}^{2}p_{p},q_{p}\right)
_{L^{2}(\Omega _{p})} \\ 
=c_{p}\left( p_{p},q_{p}\right) _{L^{2}(\Omega _{p})}-\alpha _{p}\left( 
\mathcal{\tilde{K}}(\Delta _{\omega _{p}}w),q_{p}\right) _{L^{2}\Omega
_{p})}+\left( k_{p}\partial _{s}p_{p},\partial _{s}q_{p}\right)
_{L^{2}(\Omega _{p})} \\ 
\text{ \ \ \ \ }-\left\langle k_{p}\partial _{s}p_{p},q_{p}\right\rangle
_{\omega _{p}^{+}}+\left\langle k_{p}\partial _{s}p_{p},q_{p}\right\rangle
_{\omega _{p}^{-}}.%
\end{array}
\label{pp2}
\end{equation}

 Now, we add the relations (\ref{fluid})-(\ref{pp2}), and then subtract from the resulting sum the variational relation (\ref{rel}). In this way, we obtain the following relation, for all $\mathbf{\psi }=[\mathbf{k},q_{b},\xi ,q_{p},\boldsymbol{\delta }]\in \Sigma $:

\begin{multline}
     - \langle \sigma_f(\bm{u}, \pi) \bm{e}_3, \bm{\delta} \rangle_{\Gamma_I} + \langle k_b \partial_{\bm{e}_3} p_b, q_b \rangle_{\Gamma_I} - \langle k_p \partial_s p_p, q_p \rangle_{\omega_p^+} + \langle k_p \partial_s p_p, q_p \rangle_{\omega_p^-} - \beta \int_{\Gamma_I} (\bm{u} \cdot \bm{\tau}) (\bm{\delta} \cdot \bm{\tau}) d\Gamma_I \\ - \int_{\Gamma_I} [w - (\bm{u} \cdot \bm{e}_3)] q_p(\cdot, -h/2) d \Gamma_I - \int_{\Gamma_I} p_p(\cdot, -h/2) (\bm{\delta} \cdot \bm{e}_3) d\Gamma_I = - \int_{\Gamma_I} f_4 q_p(\cdot, -h/2) d\Gamma_I.
     \label{bnd}
\end{multline}
(a) At this point, we set
\begin{equation}
  v = w - f_4 \in H_0(\omega_p).
\end{equation}
Now, given $h \in H_0^{\frac{1}{2} + \varepsilon}(\Gamma_I)$, then let $\tilde{h}$ denote its extension by zero. So, $\tilde{h} \in H_0^{\frac{1}{2} + \varepsilon}(\partial \Omega_p)$. Then by the Sobolev Trace Theorem, $\exists \Theta(\tilde{h}) \in H'(\Omega_p) \ni \Theta(\tilde{h})\vert_{\omega_p^-} = h$ and $\Theta(\tilde{h})\vert_{\omega_p^+} = 0.$ Taking the $\Sigma$- test function $\psi = [\bm{0},0,0,\Theta(h),\bm{0}]^T$ in \ref{bnd}, we have
\begin{equation}
    \langle k_p \partial_s p_p - [v - (\bm{u} \cdot \bm{e}_3], h \rangle_{\Gamma_I} = 0, \forall h \in H_0^{\frac{1}{2} + \varepsilon}(\Gamma_I)
\end{equation}
and in fact $\forall h \in H^{\frac{1}{2}}(\Gamma_I)$, by density. Thus
\begin{equation}
    k_p \partial_s p_p \vert_{\omega_p^-} = v - (\bm{u} \cdot \bm{e}_3).
\end{equation}
(b) Given $g \in H_0^{\frac{1}{2} + \varepsilon}(\Gamma_I)$, then its zero extension $\tilde{g}$ onto $\Omega_f$ is in $H^{\frac{1}{2} + \varepsilon}(\partial \Omega_f).$ By the Sobolev Trace Theorem there then exists $\phi \in H^1_{\#,*}(\Omega_f) \ni \phi \vert_{\Gamma_I} = \tilde{g} \vert_{\Gamma_I} = g.$ Taking then $\psi = [\bm{0},0,0,0,\phi\bm{e}_3]^T,$ \ref{bnd} becomes
\begin{equation*}
    - \langle \sigma_f \bm{e}_3 \cdot \bm{e}_3, g \rangle_{\Gamma_I} - \int_{\Gamma_I} p_p(\cdot, -h/2) g d\Gamma_I = 0,
\end{equation*}
for given $g \in H_0^{\frac{1}{2}+\varepsilon}(\Gamma_I)$. So,
\begin{equation}
- \sigma_f \bm{e}_3 \cdot \bm{e}_3 \vert_{\Gamma_I} = p_p \vert_{\omega_p^-}.
    \label{normal}
\end{equation}
(c) with the same function $\phi \in H^1_{\#,*}(\Omega_f)$ as in (b), taking in \ref{bnd} with test function $\psi = [\bm{0},0,0,0,\phi\bm{e}_j]^T, j = 1,2,$ we have
\begin{equation}
    - \sigma_f \bm{e}_3 \cdot \bm{\tau} \vert_{\Gamma_I} = \beta (\bm{u} \cdot \bm{\tau}) \vert_{\Gamma_I}.
\end{equation}
(d) Lastly, using again the zero extension $\tilde{g} \in H^{\frac{1}{2} + \varepsilon}(\partial \Omega_f)$ of a $H_0^{\frac{1}{2} + \varepsilon}(\Gamma_I)$ function $g$, the Sobolev Trace Theorem insures $\exists \Phi_b \in H^1_{\#,*}(\Omega_b) \ni \Phi_b \vert_{\Gamma_I} = g$. Likewise this Theorem provides for $\Phi_p \in H^1(\Omega_p) \ni$
\begin{equation*}
\Phi_p = \begin{cases}
    g & \text{on} \ \omega_p^+ \\
    0 & \text{on} \ \partial \Omega_p \setminus \omega_p^+.
\end{cases}
\end{equation*}
Subsequently taking $\Sigma$ - test function $\psi = [\bm{0}, \Phi_b, 0, \Phi_p, \bm{0}]^T$ in \ref{bnd}, we have
\begin{equation*}
    \langle k_b \partial_{\bm{e}_3} p_b - k_p \partial_s p_p, g \rangle_{\Gamma_I} = 0, \forall g \in H_0^{\frac{1}{2} + \varepsilon}(\Gamma_I),
\end{equation*}
and so
\begin{equation}
    k_b \partial_{\bm{e}_3} p_b = k_p \partial_s p_p \vert_{\omega_p^+}.
\end{equation}
Note that applying the interface condition \ref{normal}  to equation \ref{l.5} gives, as needed,
$$\rho_b w + D\Delta_{\omega_p}^2 w + \gamma w + \alpha_p \Delta_{\omega_p} \mathcal{K}(p_p) - F_p = \rho_p [f_5 + f_4] \ \text{in} \ L^2(\omega_p)$$
where as always $$F_p\equiv \sigma _{b}\mathbf{e}_{3}\cdot \mathbf{e}_{3}-\sigma _{f} \mathbf{e
}_{3} \cdot \mathbf{e}_{3}.$$
Finally, set
\begin{equation*}
    \bm{\zeta} = \bm{\eta} - \mathbf{f}_1 \in \bm{H}^1_{\#,*}(\Omega_b).
\end{equation*}
This  concludes the maximality argument.

\section{Nonlinear Extension: von K\'arm\'an's Plate}
\label{nonlinear}

In this section we demonstrate well-posedness of mild
solutions to the dynamic problem
\emph{in the presence of the scalar von K\'arm\'an nonlinearity} \cite{springer}
$$f(w) = [w, v(w) + F_0]$$
which is defined in terms of the von K\'arm\'an bracket
$$[u, w] = \partial_{x}^2 u \partial_{y}^2 w + \partial_{y}^2 u \partial_{x}^2 w - 2 \partial_{xy} u \partial_{xy} w$$ and $F_0$ is a given function from $H^4(\Omega)\cap H_0^2(\Omega)$. 
Following \cite{vk}, we restate some basic facts about the von K\'arm\'an nonlinearity taken from \cite{springer}, which leads to the \emph{locally Lipschitz} property of $f$ from $H_{0}^{2}(\omega_p
)\rightarrow L^{2}(\omega_p )$. The latter provides, through standard perturbation results \cite{pazy}, the ability to extend semigroup solutions in the presence of the nonlinear term. 

To obtain the central property, we rely on the modern property known as the  sharp regularity of the Airy stress
function. To wit, this can be found in \cite[Corollary 1.4.4]{springer}. First, denoting the one has the estimate,%
\begin{equation}
\Vert A_C^{-1}[u,w]\Vert _{W^{2,\infty }(\omega_p
)}\leq C\Vert u\Vert _{2,\omega_p }\Vert w\Vert _{2,\omega_p },  \label{bi_D}
\end{equation}%
where $A_C$ denotes the biharmonic operator with clamped
boundary conditions on $\omega_p$. The function $v=v(u)$ is the so called Airy stress function, which satisfies the following BVP
\begin{align*}
    \Delta^2 v + [u,u] = 0;~~~~ 
    v \vert_{\partial \omega_p} = \frac{\partial v}{\partial n} \vert_{\partial \omega_p} = 0.
\end{align*}
Combining the previous two items, we have from (\ref{bi_D}) 
\begin{equation*}
\Vert v(w)\Vert _{W^{2,\infty }(\omega_p )}\leq C\Vert w\Vert _{2,\omega_p }^{2}.
\end{equation*}%
From this, we can show that 
\begin{equation}
\Vert \lbrack u_{1},v(u_{1})]-[u_{2},v(u_{2})]\Vert _{L^{2}(\omega_p )}\leq C%
\Big(\Vert u_{1}\Vert _{H_{0}^{2}(\omega_p )}^{2}+\Vert u_{2}\Vert
_{H_{0}^{2}(\omega_p )}^{2}\Big)\Vert
u_{1}-u_{2}\Vert _{H_{0}^{2}(\omega_p )}  \label{airy-lip}
\end{equation}%
(\cite[Corollary 1.4.5]{springer}). Thus, the nonlinearity $%
f(w)=[w,v(w)+F_{0}]$ is locally Lipschitz from $H_{0}^{2}({\omega_p })$ into $L^{2}(\omega_p )$.

We also mention that the von K\'arm\'an nonlinearity is {\em potential}. Namely, it has the property that there exists a
potential energy functional $\Pi$ associated with $f$. In the case of the
von K\'arm\'an nonlinearity, it has the form 
\begin{equation*}
\Pi(w)=\frac14\int_{\omega_p}\Big(|\Delta v(w)|^2 -2w[w,F_0] \Big) dx,
\end{equation*}
and possesses the properties that $\Pi$ is a $C^1$-functional on $%
H^2_0(\omega_p)$ such that $f$ is a Fr\'echet derivative of $\Pi$: $%
-f(w)=\Pi^{\prime }(w)$. From this it follows that for a smooth function $w$%
: 
\begin{equation*}
\dfrac{d}{dt}\Pi(w)=(\Pi^{\prime }(w),w_t)=-(f(w),w_t)_{\omega_p}.
\end{equation*}
Moreover $\Pi(\cdot)$ is locally bounded on $H^2_0(\omega_p)$, and there exist 
$\eta<1/2$ and $C\ge 0$ such that 
\begin{equation}  \label{8.1.1c1}
\eta \|\Delta w\|_{\omega_p}^2 +\Pi(w)+C \ge 0\;,\quad \forall\, w\in
H^2_0(\omega_p)\;.
\end{equation}

\begin{remark}
We note that the Berger and Kirchhoff nonlinearities, for instance discussed
in \cite{supersonic,Chu2013-comp}, satisfy the above properties; they:
(i) are locally Lipschitz $H_0^2(\omega_p)\to L^2(\omega_p)$, (ii) have a $C^1$
antiderivative $\Pi$ satisfying the above properties.
\end{remark}

We now address the system \eqref{*.a}--\ref{*.l}, where $F_p$ includes the nonlinear restoring force, $f(w)$, as above. That is to say, we take the p
plate nonlinearity on the RHS: 
\begin{equation}
w_{tt}+D \Delta_{\omega_p}^{2}w + \gamma w +  \alpha_p \Delta_{\omega_p} \mathcal{K}(p_p)  =\sigma_b \mathbf{e}_3 \cdot \mathbf{e}_3 \vert_{\omega_p} - \sigma_f \mathbf{e}_3 \cdot \mathbf{e}_3 \vert_{\omega_p}+[w,v(w)+F_{0}]~\text{ on }~\Omega
\times (0,\infty ).
\label{nlp}
\end{equation}%
We will show the well-posedness of \emph{mild solutions} (in the sense of 
\cite{pazy}) in the presence of the von K\'arm\'an nonlinearity. To th\i s end,
we define a nonlinear operator $\mathcal{F}:\mathbf{X}\rightarrow \mathbf{X%
}$, given by 
\begin{equation*}
\mathcal{F}\big(\lbrack \bm{\eta},\bm{\zeta},p_b,w,v,p_p,\mathbf{u}]\big)=[\mathbf{0}, \mathbf{0}, 0, 0, f(w), 0,\mathbf{0}].
\end{equation*}%
This mapping is locally Lipschitz from $\mathbf X \to \mathbf X$ (by the properties of $f$ above), and thus
will be considered as a perturbation to the Cauchy problem (\ref{cauchy}) as shown in (\ref{cauchy_vk})

\begin{theorem}
\label{th:nonlin} The nonlinear Cauchy problem in \eqref{cauchy_vk} is well-posed in the sense of mild solutions. This is to
say: there is a unique local-in-time mild solution $\mathbf{y}(t)$ on $t\in
\lbrack 0,t_{\text{max}})$ (which is also a weak solut\i on). Moreover, for $%
\mathbf{y}_{0}\in {D}(\mathcal{A})$, the corresponding solution is
strong.

In e\i ther case, when $t_{\text{max}}(\mathbf{y}_{0})<\infty $, we have
that $||\mathbf{y}(t)||_{\mathbf{X}}\rightarrow \infty $ as $t\nearrow t_{%
\text{max}}(y_{0})$.
\end{theorem}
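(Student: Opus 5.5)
The plan is to treat \eqref{cauchy_vk} as a locally Lipschitz perturbation of the linear problem \eqref{cauchy} and apply the standard results of \cite{pazy} (Theorem 6.1.4 for mild solutions, Theorem 6.1.5 for strong solutions). Theorem \ref{th:main1} shows that $\mathbf{A}$ generates a $C_0$-semigroup of contractions $e^{\mathbf{A}t}$ on $\mathbf{X}$. So it remains to check that $\mathcal{F}:\mathbf{X}\to\mathbf{X}$ is locally Lipschitz. For this we take $\mathbf{y}_1,\mathbf{y}_2\in\mathbf{X}$ with $\|\mathbf{y}_i\|_{\mathbf{X}}\le R$. The only nonzero entry of $\mathcal{F}(\mathbf{y}_1)-\mathcal{F}(\mathbf{y}_2)$ is $f(w_1)-f(w_2)$ in the $v$-slot, and its $\mathbf{X}$-norm is $\rho_p^{1/2}\|f(w_1)-f(w_2)\|_{L^2(\omega_p)}$. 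We split $f(w)=[w,v(w)]+[w,F_0]$. The cubic part is handled by \eqref{airy-lip}. The linear part satisfies $\|[w_1-w_2,F_0]\|_{L^2}\le C\|F_0\|_{W^{2,\infty}}\|w_1-w_2\|_{H^2_0}$, using $F_0\in H^4\subset W^{2,\infty}$ in 2D. Since $D\|\Delta w\|^2$ is part of the energy norm and $\|\Delta\cdot\|$ is equivalent to the $H^2_0(\omega_p)$ norm, we obtain $\|\mathcal{F}(\mathbf{y}_1)-\mathcal{F}(\mathbf{y}_2)\|_{\mathbf{X}}\le L(R)\|\mathbf{y}_1-\mathbf{y}_2\|_{\mathbf{X}}$ with $L(R)\le C(1+R^2)$.

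Next comes the construction of the local mild solution. We run the usual Banach fixed point argument on the variation-of-parameters map
\[
(\mathcal{T}\mathbf{y})(t)=e^{\mathbf{A}t}\mathbf{y}_0+\int_0^t e^{\mathbf{A}(t-s)}\mathcal{F}(\mathbf{y}(s))\,ds
\]
on a closed ball of $C([0,T_0];\mathbf{X})$ with radius $2\|\mathbf{y}_0\|_{\mathbf{X}}$. Contractivity of the semigroup lets us take $T_0$ depending only on $\|\mathbf{y}_0\|_{\mathbf{X}}$ through $L(\cdot)$. Uniqueness follows from Gronwall. We then continue the solution to a maximal interval $[0,t_{\max})$. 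Because $T_0$ depends only on the size of the data, the solution can be restarted whenever $\|\mathbf{y}(t)\|_{\mathbf{X}}$ stays bounded. This gives the blow-up alternative: if $t_{\max}<\infty$, then $\|\mathbf{y}(t)\|_{\mathbf{X}}\to\infty$ as $t\nearrow t_{\max}$ (a bounded $\limsup$ would permit extension past $t_{\max}$).

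For $\mathbf{y}_0\in\mathcal{D}(\mathbf{A})$, the claim that the solution is strong is the step that needs care. Pazy's Theorem 6.1.5 wants $\mathcal{F}$ to be continuously differentiable, or at least Lipschitz, in a way that gives $\mathbf{y}\in W^{1,1}_{loc}$. Since $\mathbf{X}$ is a Hilbert space (hence reflexive) and $\mathcal{F}$ is locally Lipschitz, we would instead argue directly. For small $h$, estimate $\|\mathbf{y}(t+h)-\mathbf{y}(t)\|_{\mathbf{X}}$ by Gronwall, using
\[
\|e^{\mathbf{A}h}\mathbf{y}_0-\mathbf{y}_0\|_{\mathbf{X}}\le h\|\mathbf{A}\mathbf{y}_0\|_{\mathbf{X}}
\]
and the Lipschitz bound on $\mathcal{F}$. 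This shows $\mathbf{y}$ is Lipschitz in time on compact subintervals of $[0,t_{\max})$, so $\mathcal{F}(\mathbf{y}(\cdot))$ is Lipschitz into $\mathbf{X}$ as well. A Lipschitz forcing gives a strong solution of the inhomogeneous linear problem (\cite[Cor.~4.2.10]{pazy} in reflexive spaces), which yields $\mathbf{y}(t)\in\mathcal{D}(\mathbf{A})$ and \eqref{cauchy_vk} a.e.\ in $t$. Alternatively, one can note that the von K\'arm\'an map is $C^1$ from $H^2_0$ to $L^2$, since it is a polynomial built from the bounded bilinear bracket and the Airy map, and then apply Theorem 6.1.5 verbatim.

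Finally, the statement that mild solutions are weak solutions follows in the standard way. Approximate $\mathbf{y}_0\in\mathbf{X}$ by data in $\mathcal{D}(\mathbf{A})$. On a common interval, local Lipschitz continuity and Gronwall give convergence in $C([0,T];\mathbf{X})$. Since strong solutions satisfy the variational identities obtained by pairing with test functions (as in the derivation of \eqref{var}), these identities pass to the limit.
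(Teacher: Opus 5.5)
Your proposal is correct and follows the paper's route. You show that $\mathcal{F}$ is locally Lipschitz on $\mathbf{X}$ via \eqref{airy-lip}, then use Pazy's Theorem 6.1.4 for the local mild solution and blow-up alternative, and a localized Theorem 6.1.6 (reflexive space, Lipschitz nonlinearity) for strong solutions; your Lipschitz-in-time plus Corollary 4.2.10 argument is just that theorem's proof written out, so you are supplying details the paper leaves to the citation.

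One small fix: to get $\|\mathbf{y}(t)\|_{\mathbf{X}}\to\infty$ rather than mere unboundedness, you need that a bounded $\liminf$ (not $\limsup$) permits extension past $t_{\max}$, which your uniform restart time $T_0$ already provides.
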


\begin{proof}
With the fluid-structure semigroup $e^{\mathcal At}$ in hand from Theorem \ref{th:main1}, this is a direct application of \cite[Theorem 1.4, p.185]{pazy}
and  the localized version of \cite[Theorem 1.6, p.189]{pazy}, pertaining to locally Lipschitz perturbations of semigroup dynamics.  
\end{proof} In order to guarantee global solutions, i.e., valid solutions of (%
\ref{cauchy_vk}) on $[0,T]$ for any $T>0$, we must utilize the
\textquotedblleft good" structure of $\Pi $. The energy identity, in the
presence of nonlinearity (i.e., \eqref{nlp}), is
obtained in a standard way using the properties of $\Pi $ (see \cite%
{supersonic,Chu2013-comp}). Consider $\mathbf{y}_{0}=[\bm{\eta}_0, \bm{\zeta}_0, p_{b,0}, w_0, v_0, p_{p,0}, \mathbf{u}_0]^T \in \mathbf{X}$. Any
mild solution corresponding to (\ref{cauchy_vk}) satisfies: 
\begin{align}
\mathcal{E} &\big(\bm{\eta}(t),\bm{\zeta}(t),p_b(t),w_{t}(t), v(t), p_p(t), \mathbf{u}(t) \big) +\Pi (w(t)) \\
=&~ \mathcal{E}\big(\bm{\eta}_0, \bm{\zeta}_0, p_{b,0}, w_0, v_0, p_{p,0}, \mathbf{u}_0\big)+\Pi (w(0)) \\
&+\frac{1}{2}\int_{0}^{t}\left[ \beta \int_{\Gamma
_{I}}\left\vert \mathbf{u}\cdot \mathbf{\tau }\right\vert ^{2}d\Gamma
_{I}+\left\Vert k_{b}^{\frac{1}{2}}\nabla p_{b}\right\Vert _{\Omega
_{b}}^{2}+\left\Vert k_{p}^{\frac{1}{2}}\partial _{s}p_{p}\right\Vert
_{\Omega _{p}}^{2}+\mu _{f}\left\Vert \nabla \mathbf{u}\right\Vert _{\Omega
_{f}}^{2}\right]d\tau .  \notag
\end{align}%
From this a priori relation, Gronwall's inequality, and the bound on the $%
\Pi$ in \eqref{8.1.1c1}, we observe that the previously obtained {\em local} mild solutions are {\em necessarily} bounded on any interval  $[0,T]$; immediately we obtain the final (nonlinear) well-posedness theorem, stated below. 

\begin{theorem}[Restatement of Theorem \ref{th:main4}.]
For any $T>0$, the Cauchy problem (\ref{cauchy_vk}) is semigroup well-posed on $\mathbf{X}$ for all $[0,T]$. This is to say
that the PDE problem in \eqref{*.a}--\eqref{*.l}, taking into account
the nonlinear plate equation \eqref{nlp}, is well-posed in the sense of
mild solutions for initial data $\mathbf y_0 \in \mathbf X$. 

Moreover, we have the \emph{%
global-in-time} estimate for solutions: 
\begin{equation}
\sup_{t \in [0,\infty)} \mathcal{E}(\bm{\eta}(t), \bm{\zeta}(t),p_b(t),w_{t}(t), v(t), p_p(t), \mathbf{u}(t)) \le 
\mathbf{C}(\bm{\eta}_0, \bm{\zeta}_0, p_{b,0}, w_0, v_0, p_{p,0}, \mathbf{u}_0, F_0).
\end{equation}
\end{theorem}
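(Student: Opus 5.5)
The plan is to upgrade the local well-posedness of Theorem \ref{th:nonlin} to global well-posedness by showing that $\|\mathbf y(t)\|_{\mathbf X}$ stays bounded on every finite interval. The blow-up alternative in Theorem \ref{th:nonlin} then gives $t_{\max}(\mathbf y_0)=\infty$. The bound will come from the nonlinear energy identity, and the potential structure of the von K\'arm\'an term is what makes that identity usable.

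First, take $\mathbf y_0\in\mathcal D(\mathbf A)$, so that the local solution is strong. For such data I would justify the energy identity rigorously. Repeat the dissipativity computation of Section \ref{dissss}, now with the extra term $(f(w),w_t)_{\omega_p}$ from $\mathcal F$. Since $-f=\Pi'$ and $\Pi$ is $C^1$ on $H^2_0(\omega_p)$, and $w\in C^1([0,t_{\max});H_0^2)$ for strong solutions, we have $(f(w),w_t)=-\frac{d}{dt}\Pi(w)$. Integrating in time then gives
\begin{equation*}
\tfrac12\mathcal E(t)+\Pi(w(t))+\int_0^t \mathcal{D}(\tau)\,d\tau=\tfrac12\mathcal E(0)+\Pi(w_0),
\end{equation*}
where $\mathcal D\ge 0$ is the dissipation appearing in the computation of $(\mathbf A\mathbf y,\mathbf y)_{\mathbf X}$. (Note that the dissipation enters with a sign that only helps.) For general $\mathbf y_0\in\mathbf X$, approximate by domain data. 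The local Lipschitz estimate \eqref{airy-lip} gives continuous dependence of mild solutions on a common interval, and $\Pi$ is continuous on $H^2_0$, so the identity, or at least the inequality with $\le$, passes to the limit.

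Second, I would use the lower bound \eqref{8.1.1c1}. Since $\eta<1/2$, the bound $\Pi(w)\ge -\eta\|\Delta w\|^2-C$ gives
\begin{equation*}
(\tfrac12 - \eta D^{-1})\min(1,\cdot)\,\mathcal E(t)\le \tfrac12\mathcal E(0)+\Pi(w_0)+C.
\end{equation*}
More carefully, absorb $\eta\|\Delta w\|^2$ into the $D\|\Delta w\|^2$ part of $\mathcal E$. To make this clean, normalize $D$, or note that \eqref{8.1.1c1} holds for every $\eta>0$ with $C=C_\eta$. The latter follows from the sharp Airy estimate, because $\|\Delta v(w)\|^2\ge0$ and the term $w[w,F_0]$ is lower order; this is the step that needs care. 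Finally, $\Pi(w_0)$ is finite by local boundedness of $\Pi$ on $H^2_0$, so the right side is a constant $\mathbf C(\mathbf y_0,F_0)$. If one prefers not to track the constants exactly, a Gronwall argument on the $\gamma\|w\|^2$ and lower-order parts gives the same conclusion.

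Third, energy boundedness gives a bound on $\|\mathbf y(t)\|_{\mathbf X}$, because $\mathcal E$ is equivalent to the $\mathbf X$-norm squared (the $\gamma\|w\|^2$ term aside, which is controlled by $\|\Delta w\|$ via Poincar\'e on $H^2_0$). So the norm cannot blow up, and therefore $t_{\max}=\infty$. This yields mild solutions on any $[0,T]$ together with the uniform bound \eqref{globalbound}. I expect the main obstacle to be the rigorous energy identity for merely mild solutions. This means handling the limit in $\Pi$ and in the dissipation integrals, where I would use weak lower semicontinuity, together with the absorption constant in \eqref{8.1.1c1}.
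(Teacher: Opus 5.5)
Your proposal is correct and follows the paper's route: the nonlinear energy identity (using $-f=\Pi'$), the lower bound \eqref{8.1.1c1} to absorb $\Pi$ into $\mathcal E$, and the blow-up alternative of Theorem~\ref{th:nonlin} to force $t_{\max}=\infty$. Your strong-to-mild density argument, your dissipation integral on the favorable side of the identity, and your direct absorption (which, unlike a Gronwall step, gives the bound over all of $[0,\infty)$) make explicit what the paper only calls standard. One caution on your remark that \eqref{8.1.1c1} holds for every $\eta>0$: you need this for arbitrarily small $\eta$, because absorption requires $\eta<D/2$ and rescaling $D$ does not change that, but you cannot justify it by discarding $\|\Delta v(w)\|^2\ge 0$ and calling $\int_{\omega_p} w[w,F_0]$ lower order. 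That estimate leaves a term $C_\epsilon\|w\|^2_{L^2(\omega_p)}$ that is not bounded by a constant, and it is exactly the quartic Airy term $\tfrac14\|\Delta v(w)\|^2$ that must be used to control it.
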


\section{Future Work}
The main results presented here complement those for semigroup solutions for the Biot-Stokes filtration in \cite{AGW,AW} and for weak solutions in \cite{bcmw}. Namely, we have provided an operator-theoretic framework for the multilayer filtration with a poroelastic plate as the mediating interface. We are then able to demonstrate that the underlying dynamics operator generates a strongly continuous semigroup, which provides strong and semigroup (mild) solutions in the inertial and non-degenerate regimes. With the semigroup in hand, we are able to incorporate and accommodate large deflection nonlinearity   into the solution framework via perturbation theory. 

In this brief section we try to contextualize our result from the point of view of now-possible future work, as well as the usefulness of our work in other multiphysics and/or poroelastic research platforms.

\begin{itemize}
\item First, with a semigroup framework in hand, we can proceed to investigate both regularity and stability properties of the solution. 
The former results from invoking elliptic regularity results---when possible---to obtain clear regularity classes for the coupled dynamics in the case of strong solutions. With this information, nonlinear extensions involving bulk poroelasticity, as well as fluid nonlinearity, can be considered (likely from the fixed point point of view---see \cite{BMW,bgsw,bw}). 

For stability, the resolvent problem formulated here allows us to begin spectral analysis to determine whether the dissipating and coupling in the problem is sufficient to yield asymptotic stability, or perhaps, a stronger notion of stability. While it is unlikely that uniform stability holds in these coupled scenarios---see \cite{georgemulti,AT}---we believe that the dissipation in the problem is inherently strong enough to provide some decay. 

Finally, with the related results for Biot-Stokes filtrations established in \cite{AGW}, both of the aforementioned questions can be studied comparatively across the two filtration dynamics---Biot-Stokes and multilayer. In this way, the presence of the poroelastic plate as the interface can be studied from the point of view of it being (de)-regularizing and/or (de)-stabilizing. 

\item We freely acknowledge that the toroidal domain invoked here is chosen for mathematical convenience. In particular, the technical issues associated with boundary triple points, corners, and interface edges, are substantial. However, a forthcoming work will exploit certain recent observations on 3D coupled systems \cite{georgeelim} which should permit the resolution and definition of interface traces of higher order, which, heretofore, have been technically insurmountable. The central issue revolves around the resolution of the pressure, and the ability to define a certain elliptic problem in the fluid domain which operates at very low regularity. 

We believe that the results herein can be appropriately adapted (for strong and semigroup) solutions in the near term, bearing in mind the modifications in regularity introduced by the non-separation of boundary components, as well as the need to interpret certain higher-order traces in the generalized sense, even for strong solutions. 
\item Although our work here deals with the linear poroplate, as derived in \cite{mikelic} and studied in \cite{ellie,bcmw}, there are groups interested in nonlinear extensions. In particular, for moving domains (as in the FSI literature, \v{C}ani\'c et al.), the interface should not be treated in a static way. Hence, a large deflection plate model is warranted. On the other hand, the derivation of the poroplate (using scalings and homogenization), should intrinsically include the possibility of nonlinear perfusion in the nonlinear deflection regime---as these dynamics are necessarily coupled.

In this paper we have demonstrated the ability to incorporate structural nonlinearity in the plate, without regard to the moving interface or internal diffusion of the poroplate. We note that the literature which invokes a {\em reticular plate} might easily allow for plate nonlinearity as well, but does not contain a perfusive dynamics. Hence, we believe two natural ways to extend the results here are: (1) derive and analyze a large deflection model which permits perfusion on the deformed domain; (2) incorporate such a model in to a moving domain framework (as in \cite{jeff1,jeff2,jeff3}), where the large poroplate deflections indeed dictate the interface, and, as such, the time-evolving fluid and bulk poroelastic domains. 
\end{itemize}

\begin{appendix}

\begin{section}{Babu\v{s}ka-Brezzi Lemma \texorpdfstring{\cite{kesavan}}{}} \label{sec:babuksa-brezzi}
    \begin{theorem}{Babu\v{s}ka-Brezzi}
        Let $\Sigma$ and $V$ be real Hilbert spaces. Let $a : \Sigma \times \Sigma \to \mathbb{R}$ and $b : \Sigma \times V \to \mathbb{R}$ be continuous bilinear forms. Let
        $$Z = \{ \sigma \in \Sigma ~|~ b(\sigma, v) = 0, \forall v \in V \}.$$
        Assume $a$ is Z-elliptic, and $b$ satisfies the Babu\v{s}ka-Brezzi condition:
        There exists $\beta > 0$ such that
        $$\sup_{\tau \in \Sigma \setminus \{ 0 \}} \frac{b(\tau,v)}{\Vert \tau \Vert_{\Sigma}} \geq \beta \Vert v \Vert_{V}, \forall v \in V.$$
        Then given $\kappa \in \Sigma$ and $l \in V$, there exists unique $(\sigma, u) \in \Sigma \times V$ such that
        \begin{align*}
            a(\sigma, \tau) + b(\tau, u) &= (\kappa, \tau)_{\Sigma} &\forall \tau \in \Sigma \\
            b(\sigma, v) &= (l,v)_V &\forall v \in V.
        \end{align*}
    \end{theorem}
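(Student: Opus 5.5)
We prove this by the standard operator-theoretic route: recast $b$ as a bounded operator, use the inf-sup condition with the Closed Range Theorem to describe its range and that of its adjoint, and then split the solution into a particular part that handles the constraint plus a kernel correction found by Lax–Milgram.

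\emph{Setup.} Define $B\in\mathcal{L}(\Sigma,V')$ by $\langle B\tau,v\rangle=b(\tau,v)$, and its transpose $B^{T}\in\mathcal{L}(V,\Sigma')$ by $\langle B^{T}v,\tau\rangle=b(\tau,v)$. Then $Z=\ker B$, and $Z$ is closed because $b$ is continuous.

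\emph{Consequences of the inf-sup condition.} The Babu\v{s}ka--Brezzi condition states exactly that $\|B^{T}v\|_{\Sigma'}\ge\beta\|v\|_{V}$. Hence $B^{T}$ is injective and has closed range. By the Banach Closed Range Theorem, $B$ also has closed range, and
\[
\operatorname{Range}(B)=(\ker B^{T})^{0}=V',\qquad \operatorname{Range}(B^{T})=(\ker B)^{0}=Z^{0},
\]
where $Z^{0}\subset\Sigma'$ denotes the annihilator of $Z$.

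Since $\ker B=Z$, the restriction $B|_{Z^{\perp}}:Z^{\perp}\to V'$ is a continuous bijection. By the open mapping theorem it has a bounded inverse. A duality argument in fact gives $\|(B|_{Z^\perp})^{-1}\|\le 1/\beta$, though only boundedness is needed here.

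\emph{Existence.}
\begin{enumerate}
\item Choose $\sigma_{0}\in Z^{\perp}$ with $B\sigma_{0}=(l,\cdot)_{V}$, so that $b(\sigma_{0},v)=(l,v)_{V}$ for all $v\in V$.
\item Since $a$ is $Z$-elliptic and continuous on the Hilbert space $Z$, Lax--Milgram gives a unique $z\in Z$ with
\[
a(z,\tau)=(\kappa,\tau)_{\Sigma}-a(\sigma_{0},\tau)\qquad\text{for all }\tau\in Z.
\]
\item Set $\sigma=\sigma_{0}+z$. Then $b(\sigma,v)=(l,v)_{V}$ for all $v$, because $z\in\ker B$.
\item Define $\ell\in\Sigma'$ by $\ell(\tau)=(\kappa,\tau)_{\Sigma}-a(\sigma,\tau)$. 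By step 2, $\ell$ vanishes on $Z$, so $\ell\in Z^{0}=\operatorname{Range}(B^{T})$. Hence there is $u\in V$ with $B^{T}u=\ell$, which is precisely the first equation $a(\sigma,\tau)+b(\tau,u)=(\kappa,\tau)_\Sigma$.
\end{enumerate}

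\emph{Uniqueness.} Let $(\sigma,u)$ be the difference of two solutions, so it solves the system with $\kappa=0$ and $l=0$. The second equation gives $\sigma\in Z$. Testing the first equation with $\tau=\sigma\in Z$ makes the $b$-term vanish, leaving $a(\sigma,\sigma)=0$, so $\sigma=0$ by $Z$-ellipticity. The first equation then reduces to $b(\tau,u)=0$ for all $\tau\in\Sigma$, and the inf-sup bound forces $u=0$.

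\emph{Main obstacle.} The only nontrivial input is the identification $\operatorname{Range}(B^{T})=Z^{0}$ together with the surjectivity of $B$, i.e.\ the Closed Range Theorem. If one prefers to avoid it, the step can be done by hand. First, the lower bound on $B^{T}$ shows $\operatorname{Range}(B^{T})$ is closed. Second, a Hahn--Banach/Riesz argument in the Hilbert space $\Sigma$ shows this range is dense in $Z^{0}$, since any $\tau$ annihilated by $\operatorname{Range}(B^{T})$ satisfies $b(\tau,\cdot)=0$, i.e.\ $\tau\in Z$. The surjectivity of $B$ onto $V'$ follows in the same way from the injectivity of $B^{T}$ combined with the closedness of $\operatorname{Range}(B)$.
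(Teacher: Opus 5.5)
Your proof is correct. It is the standard argument for this classical result: the inf-sup bound plus the Closed Range Theorem make $B^{T}\colon V\to Z^{0}$ and $B|_{Z^{\perp}}\colon Z^{\perp}\to V'$ isomorphisms, Lax--Milgram on $Z$ supplies the kernel component, and $Z$-ellipticity with the inf-sup bound gives uniqueness. The paper itself offers no proof; it only quotes the theorem in its appendix from \cite{kesavan}.

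The one soft spot is your optional ``by hand'' variant. It assumes $\operatorname{Range}(B)$ is closed, but that is precisely the nontrivial half of the Closed Range Theorem. An elementary replacement is to solve $(B^{T}v,B^{T}w)_{\Sigma'}=g(w)$ for all $w\in V$ by Lax--Milgram, which is coercive with constant $\beta^{2}$. Then take $\tau\in\Sigma$ to be the Riesz representative of $B^{T}v$; this gives $b(\tau,w)=g(w)$ for all $w$, i.e.\ $B\tau=g$.
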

\end{section}
    
\end{appendix}

\bibliographystyle{plain} \footnotesize
\bibliography{refs_updated.bib}                                                                                                                                                                                                                                             

\end{document}